\newif\ifarxiv
\newif\ifarxivcolor
\newif\ifpublication
\newif\ifpublicationcolor

\documentclass[a4paper, english, 12pt, reqno, final]{amsart}



\arxivtrue 
\arxivcolortrue 
\publicationtrue 
\publicationcolortrue 



\usepackage{amssymb}


\usepackage[numbers]{natbib}
\usepackage[english]{babel}
\usepackage[utf8]{inputenc}
\usepackage{tikz,tikzscale}
\tikzset{>=latex}
\usepackage{xcolor}
\usepackage{enumerate}
\usepackage{lscape}
\usepackage{booktabs,multirow}

\usepackage{url}

\usepackage[notref,notcite]{showkeys} 
\usepackage[mode=multiuser]{fixme} 
\FXRegisterAuthor{hh}{envhh}{HH}
\FXRegisterAuthor{ar}{envar}{AR}
\FXRegisterAuthor{dk}{envdk}{DK}
\fxusetheme{color}



\newtheorem{theorem}{Theorem}[section]
\newtheorem{lemma}{Lemma}[section]


 \theoremstyle{definition}
 
 \theoremstyle{definition}

\theoremstyle{remark}
\newtheorem{remark}[theorem]{Remark}

\numberwithin{equation}{section}



\newcommand{\elem}{\ensuremath{K}}
\newcommand{\mesh}{\ensuremath{\mathcal K_h}}

\newcommand{\IR}{\ensuremath{\mathbb R}}

\newcommand{\interp}{\ensuremath{\mathrm{I}_h}}


\newcommand{\Div}{\nabla\!\cdot\!}

\renewcommand{\vec}[1]{\ensuremath{\boldsymbol{#1}}}
\newcommand{\Nu}{\ensuremath{\vec n}}
\newcommand{\nor}{\ensuremath{\vec n}}
\newcommand{\dt}{\ensuremath{\, \textup d t}}
\newcommand{\dx}{\ensuremath{\, \textup d \vec x}}
\newcommand{\ds}{\ensuremath{\, \textup d \sigma}}


\usepackage{varioref} 
\usepackage[colorlinks=true,citecolor=blue,linkcolor=blue,urlcolor=blue]{hyperref} 
\usepackage[capitalise]{cleveref} 

\crefname{lemma}{Lemma}{Lemmata}
\crefname{equation}{}{}

\DeclareMathOperator{\diam}{diam}
\DeclareMathOperator{\supp}{supp}
\newcommand{\vel}{\ensuremath{\vec a}}

\newcommand{\uin}{\ensuremath{u^\textup{in}}}

\renewcommand{\d}{\ensuremath{\mathrm d}}

\newcommand{\bs}{\ensuremath{\bar u_{ij}}}
\newcommand{\bsl}{\ensuremath{\bar u_{ij}^*}}
\newcommand{\fij}{\ensuremath{f_{ij}}}
\newcommand{\fijl}{\ensuremath{f_{ij}^*}}

\newcommand{\mij}{\ensuremath{m_{ij}}}
\newcommand{\aij}{\ensuremath{a_{ij}}}

\newcommand{\dij}{\ensuremath{d_{ij}}}
\newcommand{\afcsum}{\ensuremath{\sum_{j \in \mathcal N_i \setminus \{i\}}}}
\newcommand{\lij}{\ensuremath{\alpha_{ij}}}

\newcommand{\ie}{, \mbox{i.\ e.,}~}
\newcommand{\eg}{\mbox{e.\ g.,}~}

\newcommand{\cf}{cf.\ }

\newcommand{\wrt}{\mbox{w.~r.~t.~}}


\renewcommand{\phi}{\ensuremath{\varphi}}

\renewcommand{\epsilon}{\ensuremath{\varepsilon}}

\renewcommand{\rho}{\ensuremath{\varrho}}

\makeatletter
\newcommand*{\coloneqq}{\ensuremath{\mathrel{\rlap{%
\raisebox{0.38ex}{$\cdot$}}\raisebox{-0.38ex}{$\cdot$}}=}}
\makeatother

\makeatletter

\makeatother


\newcommand{\doi}[1]{\textsc{doi}: \href{http://dx.doi.org/#1}{\nolinkurl{#1}}}

\allowdisplaybreaks
\graphicspath{{img/}}




\begin{document}

\title[]{Analysis of algebraic flux correction for semi-discrete 
advection problems} 

\author{Hennes Hajduk}
\address{Institute of Applied Mathematics (LS III), TU Dortmund
University, Vogelpothsweg 87, 44227 Dortmund, Germany}
\email{hennes.hajduk@math.tu-dortmund.de}
\thanks{}

\author{Andreas Rupp}
\address{Interdisciplinary Center for Scientific Computing (IWR), Heidelberg University, Mathematikon, Im Neuenheimer Feld 205, 69120 Heidelberg, Germany}
\email{andreas.rupp@fau.de, andreas.rupp@uni-heidelberg.de}
\thanks{The work of Andreas Rupp was supported by the Deutsche Forschungsgemeinschaft (DFG, German Research Association) under award EXC 2181/1 - 390900948 for the Heidelberg Excellence Cluster STRUCTURES. The work of
Hennes Hajduk and Dmitri Kuzmin was supported by the DFG grant KU 1530/23-3.}

\author{Dmitri Kuzmin}
\address{Institute of Applied Mathematics (LS III), TU Dortmund
University, Vogelpothsweg 87, 44227 Dortmund, Germany}
\email{kuzmin@math.uni-dortmund.de}
\thanks{}

\date{\today}

\keywords{algebraic flux correction; advection equation; error analysis; coercivity enforcement; monolithic limiting}


\begin{abstract}
We present stability and error analysis for algebraic flux correction schemes
based on monolithic convex limiting. For a continuous finite element discretization of the time-dependent advection equation, we prove global-in-time existence and the worst-case convergence rate of $\frac 1 2$ \wrt the $L^2$~error of the spatial semi-discretization. Moreover, we address the important issue of stabilization for raw antidiffusive fluxes. Our a priori error analysis reveals that their limited counterparts should satisfy a generalized coercivity condition. We introduce a limiter for enforcing this condition in the process of flux correction. To verify the results of our theoretical studies, we perform numerical experiments for simple one-dimensional test problems. The methods under investigation exhibit the expected behavior in all numerical examples. In particular, the use of stabilized fluxes improves the accuracy of numerical solutions and coercivity enforcement often becomes redundant.

\end{abstract}
 
\maketitle

\section{Introduction}
Algebraic flux correction (AFC) schemes were proposed in \cite{kuzmin2001,kuzmin2002} and have recently become an active research area (\eg \cite{anderson2017,guermond2016,jiang2018,kuzmin2012a,pazner-preprint}).
These methods provide a robust framework guaranteeing that discrete maximum principles hold and/or that entropy conditions are satisfied in the case of nonlinear equations. Nonlinear AFC approaches combine a high order baseline scheme, such as the Galerkin finite element discretization, with a provably bound-preserving low order approximation.
In this manner, global and/or local constraints can be imposed on the values of AFC solutions resulting from discretization of various partial differential equations (PDEs).

The focus of most efforts dealing with AFC schemes was on the development of numerical algorithms, while theoretical aspects have only recently started to attract significant interest.
Barrenechea, John and Knobloch \cite{barrenechea2016} were the first to show solvability of the nonlinear problem and prove a convergence rate of $\frac 1 2$ for stationary convection-diffusion-reaction equations.
In their subsequent work \cite{barrenechea2018}, they derived a sharper, first order error estimate under the assumption that the limiter is \textit{linearity preserving}.
Unfortunately, the proof technique that was used to obtain this 
superconvergence result relies on the presence of diffusive terms.
Lohmann \cite{lohmann2019} extended the analysis of AFC schemes to
the case without diffusive terms and obtained similar theoretical results for
linear hyperbolic problems, again with a provable rate of~$\frac 1 2$.
Other theoretically investigated aspects of AFC procedures include their connection to edge-based diffusion \cite{barrenechea2017a}, proofs of invariant domain preservation for the low order method \cite{guermond2016}, and a study of nonlinear solvers \cite{jha2019}.
The recent work of Jha and Ahmed \cite{jha-preprint} presents the first theoretical study of AFC schemes for parabolic convection-diffusion-reaction equations. Their analysis of the fully discrete problem for the backward Euler time discretization combined with the Galerkin space discretization
is, again, not readily applicable to the hyperbolic case.

This work presents semi-discrete stability and error analysis of AFC schemes for finite element discretizations of the time-dependent linear advection equation. To cure the oscillatory behavior of the continuous Galerkin method, we stabilize the antidiffusive fluxes using high order dissipation. Flux correction is performed using Kuzmin's \cite{kuzmin2020} monolithic convex limiter with a modification that we propose to enforce a generalized coercivity condition. The use of this condition as a criterion for flux limiting is motivated by our theoretical analysis. We prove that the nonlinear semi-discrete scheme is stable and its spatial accuracy \wrt $L^2$ norm is at least $\frac 1 2$ for linear finite elements and general meshes. In practice, second order superconvergence can be expected for smooth solutions and uniform meshes, as shown in the 1D numerical examples of this paper and, for instance, in \cite{anderson2017,jha-preprint,lohmann2017}.

The structure of this article is as follows. We begin with the formulation of the continuous problem and review the standard discretization procedure using a finite element method. The next section deals with construction of monolithic AFC schemes, including the aspects of high order stabilization and coercivity enforcement in the process of flux correction. Following the description of numerical algorithms, a section on numerical analysis presents the main theoretical outcomes of this effort. Finally,  we report the results of numerical experiments for 1D test problems
and make some concluding remarks.

\section{Model problem and finite element discretization}\label{sec:fem}
Let $(0,T)$ be a finite time interval and $\Omega \subset \IR^d$, $d \in \{1,2,3\}$ a polytopal domain with Lipschitz boundary $\partial\Omega$.
Advection of a scalar quantity $u = u(t,\vec x)$ 
by a prescribed velocity field $\vel \in L^\infty(0,T;W^{1,\infty}(\Omega)^d)$
in $\Omega$ is modeled by the initial boundary-value problem
\begin{subequations}\label{EQ:analytic}
\begin{align}\label{EQ:pde}
\frac{\partial u}{\partial t} + \nabla \cdot ( \vel u ) =\; & 0 && \text{ in } (0,T) \times \Omega,\\\label{EQ:bc}
u =\; & \uin  && \text{ on } (0,T) \times \Gamma_-,\\
u =\; & u_0 && \text{ in } \{0\} \times \Omega.
\end{align}
\end{subequations}
The inlet $\Gamma_-$ and the complementary part
$\Gamma_+$ of $\partial\Omega$
 are defined by
\begin{align*}
\Gamma_- \coloneqq \Gamma_-(t) = \{ \vec x \in \partial \Omega: \vel(t,\vec x) \cdot \Nu(\vec x) < 0\},\\
\Gamma_+ \coloneqq \Gamma_+(t)  = \{ \vec x \in \partial \Omega: \vel(t,\vec x) \cdot \Nu(\vec x) \ge 0\},
\end{align*}
where $\Nu = \Nu(\vec x)$ is the outward unit normal $\Nu = \Nu(\vec x)$ to $\partial\Omega$. The inflow and outflow parts of $\partial\Omega$ may change as time proceeds, but the possible dependence on $t$ is suppressed in the notation.
In this work, the velocity is assumed to be solenoidal\ie $\Div \vel = 0$. Therefore, the conservative form $\Div (\vel u)$ of the advective term is equivalent to the non-conservative form $\vel \cdot \nabla u$.
For analytical purposes, we require that the data satisfy $u_0 \in H^1(\Omega) \cap C(\overline \Omega)$ and $\uin \in L^2(0,T; C(\overline{\Gamma_-}))$.

At each time instant $t \in (0,T)$, the weak solution $u(t,\cdot)$ to \eqref{EQ:analytic} is an element of the \textit{graph space} \smash{\mbox{$V \coloneqq \{v \in L^2(\Omega): \vel \cdot \nabla v \in L^2(\Omega)\}$}}, \cite[Def. 2.1]{dipietro2012}.
The weak formulation of \eqref{EQ:analytic} reads: Find
$u\in C(0,T;V)\cap C^1(0,T;L^2(\Omega))$ such that
$u(0,\cdot) = u_0$ in $\Omega$ and
\begin{align}\label{EQ:cont-weak}
\int_\Omega w \frac{\partial u}{\partial t} \dx + a(u,w) =\;& b(w)
\qquad \forall w\in V,~t \in (0,T),\\
a(u,w) \coloneqq\;& \int_\Omega w \Div(\vel u) \dx
- \int_{\Gamma_-} w u \vel \cdot \nor \ds, 
\label{EQ:bilinform}\\
b(w) \coloneqq\;& -\int_{\Gamma_-} w \uin \vel \cdot \nor \ds.
\label{EQ:linform}
\end{align}
We refer to \cite[Sec. 3.1.1]{dipietro2012} for
a discussion on further theoretical properties of the continuous problem.
Instead, we focus on the spatial discretization of \eqref{EQ:analytic}.
For a fixed discretization parameter $h>0$, let $\mesh=\{\elem^1,\dots,\elem^E\}$ be a triangulation of $\Omega$ satisfying \mbox{$\diam{\elem^e} \le h$} for all $e=1,\dots,E$.
For simplicity, we restrict our analysis to the case of simplicial elements and assume that the mesh has no hanging nodes.

Let $\mathcal P_1(K)$ be the space of linear polynomials on $K$. Then
\begin{align*}
V_h = \{v_h \in C(\bar\Omega): v_h|_{\elem} \in \mathcal P_1(K)~\forall K \in \mesh\}
\end{align*}
is a finite-dimensional subspace of $V$. Any $v_h \in V_h$
can be written as
\begin{align*}
v_h(\vec x) = \sum_{i=1}^N v_i \phi_i(\vec x),
\end{align*}
where  $\phi_i \in V_h$ are piecewise linear Lagrange basis functions such that $\phi_i(\vec x_j) = \delta_{ij}$ for all $i,j \in \{1,\dots,N\}$. The degrees of freedom $v_i=v_h(\vec x_i)$ represent the values of $v_h$ at the mesh vertices \smash{$\{\vec x_i\}_{i=1}^N$}. In particular, a finite element approximation $u_h$ to the solution of \eqref{EQ:cont-weak}  is uniquely determined by its values at the vertices. The semi-discrete version of
problem \eqref{EQ:cont-weak} reads: Find $u_h \in C^1(0,T;V_h)$ such that
\begin{align}\label{EQ:semi-weak}
\int_\Omega w_h \frac{\partial u_h}{\partial t} \dx + a(u_h,w_h) =\;& b(w_h) \qquad \forall w_h\in V_h,~t \in (0,T)
\end{align}
and $u_h(0,\cdot) = \interp u_0$, where $\interp$ is a suitable projection operator.
Since $u_0$ is assumed to be continuous, we can use the interpolation operator
\begin{align*}
\interp : C(\overline\Omega)\rightarrow V_h, \qquad \interp w = \sum_{i=1}^N w(\vec x_i) \phi_i \quad \forall w \in C(\overline\Omega).
\end{align*}
\begin{remark}
To ensure conservation of mass \smash{$ \int_\Omega u_h(0,\cdot)\dx=
\int_\Omega u_0 \dx $}, an $L^2$-type projection can be used instead
of pointwise interpolation. The standard $L^2$ projection may, however,
produce an approximation $u_h(0,\cdot)$ that violates maximum principles
and/or exhibits spurious oscillations. For a conservative and
bound-preserving initialization of $u_h$, one can use a lumped-mass or
FCT-constrained $L^2$ projection \cite{kuzmin2010a}.
\end{remark}

To obtain an evolution equation for the time-dependent
degree of freedom $u_i(t)$, we test with $w_h = \varphi_i$ in
the semi-discrete weak form \eqref{EQ:semi-weak} of the advection
equation. This choice of $w_h$ yields
\begin{align}\label{EQ:Galerkin}
&\sum_{j=1}^N \left[\mij \frac{\d u_j}{\d t} + \aij u_j \right] = b_i \coloneqq \int_{\Gamma_-} \phi_i \uin |\vel \cdot \nor|\ds,
\\\notag
&\mij \coloneqq \int_\Omega \phi_i\phi_j\dx, \qquad
\aij \coloneqq \int_\Omega \phi_i \,\vel \cdot \nabla \phi_j \dx + \int_{\Gamma_-} \phi_i \phi_j |\vel \cdot \nor|\ds.
\end{align}
In the formulas for $a_{ij}$ and $b_{i}$, we use the assumption that
$\vel$ is solenoidal and the fact that $\vel \cdot \nor < 0$ on $\Gamma_-$
by definition.
The sum on the left hand side of \eqref{EQ:Galerkin}
reduces to that over the nodal
stencil
\begin{align*}
\mathcal N_i = \{j \in \{1,\dots,N\}:\mij \neq 0\}
\end{align*}
because $m_{ij}=0$ and $a_{ij}=0$ for all $j\notin\mathcal N_i$.

Unfortunately, the standard Galerkin discretization \eqref{EQ:Galerkin}
exhibits unsatisfactory convergence behavior both in theory and in practice.
For advection problems with smooth initial data, optimal
convergence behavior can be achieved by adding stabilization terms
to \eqref{EQ:semi-weak};
see \cite[Sec. 14.3.1--2]{quarteroni1994}. However, even stabilized
versions of \eqref{EQ:semi-weak}
tend to produce undershoots/overshoots in the vicinity
of steep gradients. The lack of stabilization 
allows local errors to grow and spread throughout the domain. These shortcomings of the Galerkin discretization can be cured using the methods presented in the following section.

\section{Algebraic flux correction tools}

The poor performance of standard finite element discretizations and stabilization techniques can be attributed to the lack of direct control over the properties of the resulting sparse matrices, which ultimately determine the qualitative behavior of numerical solutions. The concept of algebraic flux correction offers a direct way to enforce discrete maximum principles. The AFC methodology traces its origins to \cite{kuzmin2001,kuzmin2002} and provides a general framework for algebraic manipulations of high order (semi-)discretizations. Similarly to other nonlinear high-resolution schemes for hyperbolic problems, the key idea is to blend a high order \textit{target scheme} with a provably bound-preserving \textit{low order method}. The latter is constructed by adding a discrete diffusion (graph Laplacian) operator to the discrete transport operator of the baseline scheme. At a subsequent correction step, limited antidiffusive fluxes are used to remove unnecessary artificial diffusion in smooth regions.

In the remainder of this section, we introduce the main components of the nonlinear AFC methods that we study in this work. First, we present the monolithic convex limiting algorithm developed in \cite{kuzmin2020}. Next, we discuss the recommendable use of high order stabilization for anti\-diffusive fluxes that define the target scheme. Finally, we derive a new limiting procedure for enforcing a coercivity condition, which we later need to prove stability and obtain an a~priori error estimate.

\subsection{Standard monolithic convex limiting}
To derive a low order method from the matrix form \smash{$M_C\frac{\d u}{\d t}
+Au=b$} of \eqref{EQ:Galerkin}, 
the consistent mass matrix \smash{$M_C = (\mij)_{i,j=1}^N$} is approximated by its lumped counterpart \smash{$M_L = (m_i \delta_{ij})_{i,j=1}^N$}, where
\smash{$m_i = \sum_{j=1}^N\mij$}.
Furthermore, the discrete advection operator
$A=(\aij)_{i,j=1}^N$ is replaced by $A-D$, where
 \smash{$D=(d_{ij})_{i,j=1}^N$}  is a discrete
diffusion operator defined by
\begin{align}\label{EQ:discUpw}
\dij = \begin{cases}
\max\{|\aij|,|a_{ji}|\} & \mbox{if } i\neq j,\\
-\sum_{k\in\mathcal N_i\setminus\{i\}} d_{ik} & \mbox{if } i = j.
\end{cases}
\end{align}
Note that these manipulations are conservative because the matrices
$M_C-M_L$ and $D$ represent graph Laplacian operators. 

The semi-discrete form of the low order method is given by
\begin{align}\label{EQ:low}
m_i \frac{\d u_i}{\d t} = \sum_{j\in\mathcal N_i} (\dij - \aij) u_j + b_i.
\end{align}
Recalling the definitions of $a_{ij}$ and $b_i$ in \eqref{EQ:Galerkin},
we find that
\begin{align*}
b_i-\sum_{j\in\mathcal N_i} \aij u_j&= \int_{\Gamma_-} \phi_i (\uin - u_i) |\vel\cdot\nor|\ds-\sum_{j\in\mathcal N_i}u_j\int_\Omega \phi_i\, \vel \cdot \nabla \phi_j\dx\\
&- \sum_{j\in\mathcal N_i\backslash\{i\}}(u_j - u_i)
\int_{\Gamma_-}\phi_i \phi_j|\vel\cdot\nor|\ds.
\end{align*}
Since the basis functions $\varphi_j$
form a partition of unity, we
have
$$
 \sum_{j\in\mathcal N_i}\int_\Omega \phi_i\, \vel \cdot \nabla \phi_j\dx=
\int_\Omega \phi_i\, \vel \cdot \nabla  \Big(\sum_{j\in\mathcal N_i}\phi_j
\Big)\dx=0.
$$
It follows that
$$
b_i-\sum_{j\in\mathcal N_i} \aij u_j
=\int_{\Gamma_-} \phi_i (\uin - u_i) |\vel\cdot\nor|\ds
-\sum_{j\in\mathcal N_i\backslash\{i\}}a_{ij}(u_j - u_i).
$$

Using this representation and the zero row sum
property of the artificial diffusion operator $D$, we reformulate \eqref{EQ:low} as follows:
\begin{equation}
m_i \frac{\d u_i}{\d t} =  \afcsum (\dij - \aij)(u_j - u_i)
+ \int_{\Gamma_-} \phi_i (\uin - u_i) |\vel\cdot\nor|\ds.\label{EQ:LED}
\end{equation}
If $\vec x_i \in \Gamma_-$, the integral over $\Gamma_-$ reduces to that over \smash{$\Gamma_i^- \coloneqq \Gamma_- \cap \supp \phi_i$}, where $\supp$ denotes the support of a function.
For $\vec x_i \notin \Gamma_-$, this integral is zero, therefore we set $\Gamma_i^-\coloneqq \emptyset$ in this case.

We say that $u_i$ is a \textit{local extremum} if $u_i=u_i^{\min}$ or $u_i=u_i^{\max}$ for
\begin{align*}
u_i^{\min} = \min\{\min_{j\in \mathcal N_i} u_j, \min_{\vec x \in \Gamma_i^-} \uin(\vec x)\}, \quad
u_i^{\max} = \max\{\max_{j\in \mathcal N_i} u_j, \max_{\vec x \in \Gamma_i^-} \uin(\vec x)\}.
\end{align*}

 The
right hand side of \eqref{EQ:LED} is nonpositive for $u_i = u_i^{\max}$
and nonnegative for $u_i = u_i^{\min}$. This implies that
a local maximum cannot increase, while a local minimum cannot decrease.
Hence, \eqref{EQ:LED} belongs to the family of
 \textit{local extremum diminishing} (LED) methods.

\begin{remark}
The LED scheme \eqref{EQ:LED} is, indeed, a low order approximation to
 \eqref{EQ:Galerkin}. The diffusive fluxes $\dij(u_j-u_i)$ introduce an $h^{1/2}$ consistency error on general meshes \cite{barrenechea2016,lohmann2019}.
\end{remark}

In the AFC literature, \eqref{EQ:LED} with $d_{ij}$ defined by
\eqref{EQ:discUpw} is referred to as the  \textit{Rusanov scheme}
or \textit{algebraic Lax--Friedrichs method}. For theoretical
analysis purposes, \eqref{EQ:LED} can be written as \cite{guermond2016}
\begin{align}\label{EQ:barstateform}
m_i \frac{\d u_i}{\d t} = \afcsum 2\dij(\bs - u_i)
+ \int_{\Gamma_-}\phi_i (\uin - u_i)|\vel \cdot \nor|\ds
\end{align}
using the \textit{bar states}
\begin{align}\label{EQ:barstate}
\bs = \frac{u_i + u_j}{2} - \frac{\aij(u_j - u_i)}{2\dij},
\end{align}
which satisfy $\min\{u_i,u_j\} \le \bs \le \max\{u_i,u_j\}$
by \eqref{EQ:discUpw}. If discretization in time is performed
using an explicit strong stability preserving (SSP) Runge-Kutta
method, each forward Euler stage produces a convex combination
of the states $u_i$ and $\bar u_{ij}$, provided the time step is chosen
sufficiently small \cite{guermond2016,kuzmin2020}. It follows that
 the fully discrete counterpart of \eqref{EQ:low}
satisfies a local discrete maximum principle.
 
\begin{remark}
The \textit{discrete upwinding} method \cite{kuzmin2001,kuzmin2002} 
uses the artificial diffusion coefficients
$d_{ij}=\max\{a_{ij},0,a_{ji}\}$ for $j\ne i$. This definition
of $d_{ij}$ also ensures the LED property for linear advection
problems and is slightly less dissipative than \eqref{EQ:discUpw}.
However, it does not generally guarantee that the bar states
\eqref{EQ:barstate} stay between $u_i$ and $u_j$. 
\end{remark}

To construct a high order LED approximation, we
insert antidiffusive fluxes $\fij = - f_{ji}$ into \eqref{EQ:barstateform}
and consider the AFC scheme
\begin{align}\label{EQ:target}
m_i \frac{\d u_i}{\d t} = \afcsum \left[2\dij(\bs - u_i) + \fij\right] + \int_{\Gamma_-}\phi_i (\uin - u_i)|\vel \cdot \nor|\ds.
\end{align}
Note that the Galerkin approximation \eqref{EQ:Galerkin} is recovered if we use
\begin{align}
\label{EQ:fij_galerkin}
\fij = \dij (u_i - u_j) + \mij \left(\frac{\d u_i}{\d t} - \frac{\d u_j}{\d t}\right).
\end{align}
In practice, one needs to approximate the time derivatives appearing in \eqref{EQ:fij_galerkin}.
This issue will be discussed in detail in the next section. For now, let $\dot u_h \in V_h$ be a suitable approximation to \smash{$\frac{\d u_h}{\d t}$} and consider
\begin{equation}\label{EQ:fij_fix}
f_{ij}=d_{ij}(u_i-u_j)+m_{ij}(\dot u_i-\dot u_j).
\end{equation}

To ensure preservation of local bounds, the \textit{monolithic convex limiting} (MCL) approach \cite{kuzmin2020} replaces the so-defined target flux $\fij$ by
\begin{align}\label{eq:MCL}
\fijl = \begin{cases}
\min\{\fij, 2\dij(u_i^{\max} - \bs), 2\dij (\bar u_{ji} - u_j^{\min})\} & \mbox{if } \fij \ge 0, \\
\max\{\fij, 2\dij(u_i^{\min} - \bs), 2\dij (\bar u_{ji} - u_j^{\max})\} & \mbox{if } \fij \le 0. \\
\end{cases}
\end{align}
This modification can be interpreted as multiplication of $\fij$ by a correction factor $\lij \in [0,1]$, satisfying the symmetry condition $\lij = \alpha_{ji}$.
The flux-corrected version of the target scheme
\eqref{EQ:target} uses
$\fijl = \lij \fij$
instead of $f_{ij}$. It is easy to verify that the limited bar states
\begin{align}\label{EQ:barstatelim}
\bsl = \bs + \frac{\fijl}{2\dij}
\end{align}
stay in the range $[u_i^{\min},u_i^{\max}]$ for $f_{ij}^*$ defined by \eqref{eq:MCL}. Thus the bound-preserving property of the MCL scheme 
\begin{align}\label{EQ:MCL}
m_i \frac{\d u_i}{\d t} =\;& \afcsum 2\dij(\bsl - u_i) + \int_{\Gamma_-}\phi_i (\uin - u_i)|\vel \cdot \nor|\ds
\end{align}
can be shown in the same way as for the low order method; see \cite{kuzmin2020}.

Substituting \eqref{EQ:barstatelim} into \eqref{EQ:MCL} and recalling that
\eqref{EQ:barstateform} is an equivalent form of \eqref{EQ:low}, we obtain 
$$
m_i \frac{\d u_i}{\d t} = b_i+
\sum_{j\in\mathcal N_i} (\dij - \aij)u_j + \afcsum \fijl.
$$
This representation of \eqref{EQ:MCL}
can be used for practical implementation purposes. For
$f_{ij}^*=\alpha_{ij}f_{ij}$ with $f_{ij}$ defined by \cref{EQ:fij_fix},
it becomes
\begin{align}
m_i \frac{\d u_i}{\d t}=\; b_i - \sum_{j\in\mathcal N_i} \aij u_j 
&+ \afcsum (1 - \lij)\dij (u_j - u_i)
\notag \\&+
\afcsum \lij \mij (\dot u_i - \dot u_j).
\label{EQ:rewrite}
\end{align}

The fact that the correction factors $\lij$ depend on the numerical solution $u_h$ has been suppressed in our notation so far.
We make this dependence apparent in the definition of the
nonlinear forms \cite{barrenechea2016}
\begin{align}\label{EQ:d_h}
d_h(v_h;w_h,z_h) =\;& \sum_{i=1}^N z_i\afcsum (1-\lij(v_h))\dij(w_i - w_j), 
\\\label{EQ:m_h}
m_h(v_h;w_h,z_h) =\;& \sum_{i=1}^N z_i\afcsum \lij(v_h)\mij(w_i - w_j) 
\end{align}
such that \eqref{EQ:rewrite} holds for all $i=1,\dots,N$ if and only if
\begin{align}\label{EQ:MCL-weak}
\sum_{i=1}^N w_i m_i \frac{\d u_i}{\d t} &+ a(u_h,w_h)
+ d_h(u_h;u_h,w_h) \notag\\
&= b(w_h) + m_h(u_h;\dot u_h,w_h) \quad \forall w_h \in V_h.
\end{align}
This semi-discrete weak form of
the advection equation has the structure required for
finite element analysis of AFC schemes.

To derive a priori error estimates for \eqref{EQ:MCL-weak} in
\cref{sec:analysis}, we need to show the validity of
the generalized coercivity condition (GCC)
\begin{equation}\label{EQ:goal}
\frac{\gamma h}{\lambda} m_h(u_h;\dot u_h,\dot u_h)
\le  (1- \gamma) d_h(u_h; u_h, u_h)-m_h(u_h;\dot u_h,u_h),
\end{equation}
where $\gamma \in(0,1)$ is  independent of $h$
and $\lambda=$ \smash{$\|\mathbf{a}\|_{L^\infty(\Omega)}$} is
the maximum velocity. Note that
$d_h(u_h;u_h,u_h)\ge 0$ for any $u_h\in V_h$ (see
\cite{barrenechea2016,lohmann2019} or
\cref{LEM:dm-positive} below). Hence, condition
\eqref{EQ:goal} holds for any $\gamma \in(0,1)$ in the
case $\dot u_h \equiv 0$, i.e., for the lumped-mass
version of the target scheme. However, violations of
\eqref{EQ:goal} are possible for other choices of~$\dot u_h$.
Therefore, the validity of GCC may need to
be enforced using the modification of
 MCL that we propose in \cref{sec:coercivity}.

\begin{remark}
As we show in \cref{LEM:a-positive} below, $a(u_h,u_h)\ge 0$ for
any $u_h\in V_h$. The GCC criterion \eqref{EQ:goal} could be made
less restrictive by adding $a(u_h,u_h)$ on the right hand side.
However, the Galerkin part $a(u_h,u_h)$ is small in the
case of linear advection and can be negative for nonlinear
conservation laws, which we are planning to consider in the
future. For that reason, we do not include $a(u_h,u_h)$
in \eqref{EQ:goal}.
\end{remark}

\subsection{High order stabilization}

The fluxes $f_{ij}$ defined by \eqref{EQ:fij_galerkin} correspond
to the standard continuous Galerkin method which
tends to produce spurious ripples and spread local errors.
This unsatisfactory behavior cannot be cured by the flux
limiting procedure because the local bounds of the inequality
constraints are too wide to filter out small-scale oscillations.
In the literature on finite volume methods, centered
flux approximations
are commonly stabilized using second order artificial
viscosity in shock regions and fourth order background
dissipation elsewhere \cite{jameson1993,jameson1995,selmin1993,mer1998}.
This approach traces its origins to the classical Jameson-Schmidt-Turkel
(JST) scheme
\cite{jst}.

Algebraic flux correction schemes for continuous
finite elements can also be configured to use
fluxes $f_{ij}$ that include a high order dissipative
component \cite{kuzmin2020d,kuzmin2020g,lohmann2019,lohmann2017,lohner2008}.
As shown in \cite{kuzmin2009,kuzmin2020,lohmann2019}, the
desired stabilization effect can be achieved by using a
smoothed approximation to the time derivatives
 that appear in \eqref{EQ:fij_galerkin}.
In this work, we define the fluxes
\eqref{EQ:fij_fix}
using approximate time derivatives of the form
\begin{equation}\label{EQ:dudt}
\dot u_i=\dot u_i^A+\dot u_i^D,
\end{equation}
where
$$
\dot u_i^A=\frac{1}{m_i}\left( b_i - \sum_{j\in\mathcal N_i}a_{ij}u_j\right)
$$
is the advective \textit{flux potential} corresponding to the lumped-mass
version of the Galerkin scheme \eqref{EQ:Galerkin}. The dissipative flux potential
\begin{equation}\label{EQ:udot_D}
\dot u_i^D=\frac{\omega}{m_i}\sum_{j\in\mathcal N_i\backslash\{i\}}d_{ij}(u_j-u_i)
\end{equation}
stabilizes $\dot u_i^A$ using the artificial diffusion coefficients $d_{ij}$
of the low order method.
The amount of high order stabilization
can be adjusted using the parameter $\omega\in[0,1]$. We use
$\omega=1$ by default.

\begin{remark}
Antidiffusive fluxes of AFC schemes based on discontinuous Galerkin (DG)
methods (as proposed, e.g., in
\cite{anderson2017,hajduk-preprint,jiang2018,pazner-preprint})
do not require stabilization of $\dot u_i^A$ via  $\dot u_i^D$
because DG discretizations of the
linear advection equation are inherently stable.
\end{remark}

To motivate the use of \eqref{EQ:udot_D} and construct a 
coercive high order stabilization operator, we consider the
alternative definition
\begin{equation}\label{EQ:udot_S}
\dot u_i^D=\omega\frac{\beta_i}{m_i}\sum_{l\in\mathcal N_i}
m_{il}\beta_l\Delta_l(u_h)
\qquad 
\end{equation}
of $\dot u_i^D$ in terms of the discrete Laplacians
$$
\Delta_l(u_h)=\frac{1}{m_l}\sum_{k\in\mathcal N_l\backslash\{l\}}m_{lk}(u_k-u_l)
$$
and the scaling factors
$$
\beta_l=\sqrt{\frac{\max_{k\in\mathcal N_l\backslash\{l\}}d_{lk}}{
\min_{k\in\mathcal N_l\backslash\{l\}}m_{lk}}}.
$$
Note that definition \eqref{EQ:udot_S}
is similar to \eqref{EQ:udot_D} 
but uses a mass-weighted average
of discrete Laplacians instead of a single discrete Laplacian. The
parameters $\beta_l$ are defined so that the physical
units are correct and \eqref{EQ:udot_D}
coincides with the lumped-mass version of \eqref{EQ:udot_S} for
linear advection with constant velocity on uniform meshes in 1D.

Suppose that no limiting is performed (i.e., all correction factors
$\alpha_{ij}$ are set to 1) and the advective potentials $\dot u_i^A=0$
are used (e.g., because only the steady-state solution is of interest).
Then the dissipative fluxes $m_{ij}(\dot u_i^D-\dot u_j^D)$ with
$\dot u_i^D$ defined by \eqref{EQ:udot_S}
transform the lumped-mass Galerkin discretization into the stabilized
target scheme
$$
\sum_{i=1}^N w_im_i\frac{\d u_i}{\d t} + a(u_h,w_h)
+ s_h(u_h,w_h)= b(w_h)
\quad \forall w_h \in V_h.
$$
High order stabilization is represented by the bilinear form
(\cf \cite{mer1998}) 
\begin{align*}
s_h(w_h,z_h)&=\sum_{i=1}^Nz_i\sum_{j\in\mathcal N_i\backslash\{i\}}m_{ij}(\dot w_j^D-\dot w_i^D) \\
&=
\omega
\sum_{i=1}^Nz_i\sum_{j\in\mathcal N_i\backslash\{i\}}
m_{ij}
\bigg(
\frac{\beta_j}{m_j}\sum_{k\in\mathcal N_j}
m_{jk}\beta_k\Delta_k(w_h) \\&\qquad\qquad\qquad\qquad\quad
- \frac{\beta_i}{m_i}\sum_{l\in\mathcal N_i}
m_{il}\beta_l\Delta_l(w_h)\bigg)\\
&=\omega\sum_{j\in\mathcal N_i}\beta_j\sum_{i=1}^Nz_i
\Delta_j(\varphi_i)\sum_{k=1}^N
m_{jk}\beta_k\Delta_k(w_h)
\\
&=\omega\sum_{j\in\mathcal N_i}\beta_j\Delta_j(z_h)\sum_{k=1}^N
m_{jk}\beta_k\Delta_k(w_h)\\
&=\omega(\Delta_h^{\beta}(z_h),
\Delta_h^{\beta}(w_h))_{L^2(\Omega)},
\end{align*}
where $\Delta_h^{\beta}(z_h)=\sum_{j\in\mathcal N_i}\beta_j\Delta_j(z_h)$ and
 $\Delta_j(z_h)=\sum_{i=1}^Nz_i\Delta_j(\varphi_i)$ with
 $$
 \Delta_j(\varphi_i)=
  \frac{1}{m_j}\sum_{k\in\mathcal N_j\backslash\{j\}}m_{jk}
  (\delta_{ki}-\delta_{ji})
  =\begin{cases}
 \frac{m_{ji}}{m_j} & \mbox{if}\ j\ne i,\\
  - \frac{1}{m_i}\sum\limits_{l\in\mathcal N_i\backslash\{i\}}m_{il}
  & \mbox{if}\ j=i.
  \end{cases}
  $$

In view of the above, the biharmonic operator  $s_h(w_h,z_h)$
introduces fourth order background dissipation and 
is coercive. However, the bilinear form induced by the
combined contribution $m_{ij}(\dot u_i-\dot u_j)$
of the advective and dissipative flux potentials is
generally not coercive and may require further correction
after bound-preserving flux limiting. 

\subsection{Coercivity enforcement via limiting}\label{sec:coercivity}

Adopting the simpler definition \eqref{EQ:udot_D} of
$\dot u_i^D$, we limit the stabilized
target fluxes \eqref{EQ:fij_fix} in a manner which ensures
coercivity and enables us to obtain an
a priori error estimate in \cref{sec:analysis}.
Splitting the flux
$f_{ij}$ into $f_{ij}^D=d_{ij}(u_i-u_j)$ and
$f_{ij}^M=m_{ij}(\dot u_i-\dot u_j)$, we first use
the standard MCL formula \eqref{eq:MCL} to limit $f_{ij}^D$. This 
step produces $f_{ij}^*=\alpha_{ij}f_{ij}^D$ such that 
\begin{align*}
    \bar u_{ij}^{*}&=\bar u_{ij}
    +\frac{f_{ij}^*}{2d_{ij}}=\bar u_{ij}
    +\alpha_{ij}\frac{u_i-u_j}{2}
    \in[u_i^{\min},u_i^{\max}].
\end{align*}

At the next correction step, we use MCL to prelimit the fluxes
\begin{align*}
\dot f_{ij}=\text{minmod}(\fij^M, \fij^M + \fij^D - \fijl),
\end{align*}
which we define using the minmod function
\begin{align*}
\text{minmod}(a,b) \coloneqq \begin{cases}
\min(a,b) & \text{if } ab > 0, \\
\max(a,b) & \text{if } ab < 0, \\
0 &\text{otherwise.}
\end{cases}
\end{align*}

The limited counterparts $\dot f_{ij}^*$ of the fluxes $\dot f_{ij}$
are calculated using formula
\eqref{eq:MCL} with $\bar u_{ij}^{*}$ and $\bar u_{ji}^*$
instead of $\bar u_{ij}$ and $\bar u_{ji}$. This
limiting strategy guarantees that the flux-corrected bar states satisfy
\begin{align*}
     \bar u_{ij}^{**}&=\bar u_{ij}^{*}
    +\frac{\dot f_{ij}^*}{2d_{ij}} =\bar u_{ij}^{*}
    +\dot\alpha_{ij}\frac{m_{ij}(\dot u_i-\dot u_j)}{2d_{ij}}
    \in[u_i^{\min},u_i^{\max}],
\end{align*}
where $\dot\alpha_{ij}:=0$ for $\dot f_{ij}^M=0$ and 
$\dot\alpha_{ij}:=\dot f_{ij}/f_{ij}^M$ otherwise. The use
of the minmod function in the definition of $\dot f_{ij}$
guarantees that $\dot\alpha_{ij}\in[0,1]$.

To enforce \eqref{EQ:goal}, we now seek correction factors $\dot\alpha^\pm\in[0,1]$ and
 \begin{equation}\label{EQ:alphadot}
 \dot\alpha_{ij}^-=\begin{cases}
   \dot\alpha_{ij} & \mbox{if}\ (\dot u_i-\dot u_j)(u_j-u_i)\ge 0,\\
   \dot\alpha^-\dot
 \alpha_{ij} & \mbox{if}\ (\dot u_i-\dot u_j)(u_j-u_i)< 0,
   \end{cases}
   \end{equation}
such that the generalized coercivity condition
\begin{align}\label{EQ:coercivity}
&\frac{\gamma h}{\lambda} m_h(u_h;\dot\alpha^+\dot u_h, \dot\alpha^+\dot u_h)
\le  (1- \gamma) d_h(u_h; u_h, u_h)-m_h(u_h;\dot\alpha^+\dot u_h,u_h)
\nonumber\\
&\qquad\qquad= (1- \gamma) d_h(u_h; u_h, u_h)-\sum_{i=1}^Nu_i
\sum_{j\in\mathcal N_i\backslash\{i\}}\dot f_{ij}^{**}
\end{align}
holds for the fluxes $\dot f_{ij}^{**}=\dot\alpha^+\dot\alpha_{ij}^-m_{ij}
(\dot u_i-\dot u_j)$ and nonlinear forms
\begin{align}\label{EQ:d-new}
d_h(v_h;w_h,z_h) =\;& \sum_{i=1}^N z_i\sum_{j\in\mathcal N_i\backslash\{i\}} (1-\alpha_{ij}(v_h))
\dij(w_i - w_j), 
\\
m_h(v_h;w_h,z_h) =\;& \sum_{i=1}^N z_i\sum_{j\in\mathcal N_i\backslash\{i\}}
\dot\alpha_{ij}^-(v_h)\mij(w_i - w_j).\label{EQ:m-new}
\end{align}
Note that
\eqref{EQ:d-new} has the same structure as \eqref{EQ:d_h}
but the correction factors \lij~are generally different because
they are calculated using MCL for the target flux $f_{ij}^D$ rather than for
$f_{ij}=f_{ij}^D+f_{ij}^M$. Similarly, the nonlinear form \eqref{EQ:m-new} differs
from \eqref{EQ:m_h} in the definition of the correction factors
and of the corresponding target fluxes.

The limiting criterion \eqref{EQ:coercivity} implies that 
\eqref{EQ:m-new} is coercive in the generalized sense of
condition \eqref{EQ:goal} for the
time derivative approximation $\dot u_h^*=\dot\alpha^+ \dot u_h$
 such that \smash{$m_h(u_h;\dot u_h^*,z_h)=\sum_{i=1}^Nz_i
\sum_{j\in\mathcal N_i\backslash\{i\}}\dot f_{ij}^{**}$.} The modified
MCL scheme is
bound preserving because the bar states associated with
the final limited fluxes $f_{ij}^{**}=\alpha_{ij}f_{ij}^D+
\dot\alpha^+\dot\alpha_{ij}^-f_{ij}^M$ stay in the admissible range $[u_i^{\min},u_i^{\max}]$ for $\dot\alpha^+
\dot\alpha_{ij}^-\in[0,\dot\alpha_{ij}]$.

To calculate the correction factors $\dot\alpha^+$ and
 $\dot\alpha^-$ for \eqref{EQ:alphadot}, we define
    \begin{align*}
      P^+&=\sum_{i=1}^N\sum_{j=1}^{i-1}
 \dot\alpha_{ij}m_{ij}\max\{0,(\dot u_i-\dot u_j)(u_j-u_i)\},\\
  P^-&=\sum_{i=1}^N\sum_{j=1}^{i-1}
\dot\alpha_{ij}m_{ij}\min\{0,(\dot u_i-\dot u_j)(u_j-u_i)\},\\
    Q&=\frac{h}{\lambda}
      \sum_{i=1}^N\sum_{j=1}^{i-1}\dot\alpha_{ij}
     m_{ij}(\dot u_i-\dot u_j)^2,\qquad
    D=d_h(u_h;u_h;u_h).
  \end{align*}
For $\dot \alpha^-=0$, inequality \eqref{EQ:coercivity} holds if
$\dot\alpha^+P^++(1-\gamma)D\ge (\dot\alpha^+)^2\gamma Q$. To ensure the
validity of this sufficient condition, we choose
\begin{equation}\label{EQ:alphap}
    \dot\alpha^+=\min\left\{1,\frac{P^+}{2\gamma Q}
+\sqrt{\left(\frac{P^+}{2\gamma Q}\right)^2+\frac{(1-\gamma)D}{\gamma Q}}
\right\}.
\end{equation}    

In general, a sufficient condition for \eqref{EQ:coercivity} to
hold is given by
\begin{equation}
(1-\gamma)D+\dot\alpha^+(P^++\dot\alpha^-P^-)\ge
(\dot\alpha^+)^2\gamma Q.
\end{equation} 
Given $\dot \alpha^+$ defined by \eqref{EQ:alphap}, we can enforce
this constraint using
\begin{equation}
\dot\alpha^-=\min\left\{1,
    \frac{(\dot\alpha^+\gamma Q-P^+)\dot\alpha^+-(1-\gamma)D}{\dot\alpha^+P^-}\right\}.
 \end{equation}
It is easy to verify that $(\dot\alpha^+\gamma Q-P^+)\dot\alpha^+\le 0$ by
definition of $\dot\alpha^+$.

In our 1D numerical experiments, the use of
\eqref{EQ:udot_D} with $\omega=1$ was found to be enough for condition
\eqref{EQ:coercivity} to hold without the need for additional limiting.
The proposed algorithm produced $\dot\alpha^\pm=1$
for $\gamma\le 0.4$ on uniform meshes. While the use of $\dot\alpha^\pm<1$
was, indeed, necessary to enforce coercivity on perturbed meshes,
the results were just slightly more diffusive than those obtained
with the standard MCL scheme. In contrast to that, the complete
deactivation of high order stabilization by using
$\omega=0$ in \eqref{EQ:udot_D} required significant coercivity
corrections, resulting in lower overall accuracy
compared to $\omega=1$.


\section{Stability and error analysis}\label{sec:analysis}
In this section, we show boundedness of $u_h$ and derive an a~priori estimate for the error $u-u_h$. Specifically, we prove that the error behaves as \smash{$h^{1/2}$}. We formulate two main theorems in \cref{SEC:main}. Auxiliary results for theoretical analysis of AFC schemes are presented in \cref{sec:tech},
and proofs of the two theorems follow in \cref{sec:proofs}.

\subsection{Statement of main results}\label{SEC:main}
Recall that the MCL solution $u_h$ satisfies \eqref{EQ:MCL-weak} and is initialized by $u_h(0,\cdot) = \interp u_0$.
Without loss of generality, we assume that the solution $u_h$ of
semi-discrete problem \eqref{EQ:MCL-weak} satisfies the generalized
coercivity condition \eqref{EQ:goal}. If this is not the case, we
enforce \eqref{EQ:goal} with $\dot \alpha^+\dot u_h$ in lieu of
$\dot u_h$ using the strategy presented in \cref{sec:coercivity}. 

The following stability result implies existence of $u_h$ at any time $t \in (0,T)$.

\begin{theorem}\label{TH:stab}
Suppose that the coercivity condition \eqref{EQ:goal} is satisfied for the solution $u_h$ of \eqref{EQ:MCL-weak} and the associated function $\dot u_h$. Then there exists a constant $C = C(T,\vel,\uin,u_0)$ such that 
 \begin{equation}
  \| u_h \|_{L^\infty(0,T; L^2(\Omega)) } \le C.
 \end{equation}
\end{theorem}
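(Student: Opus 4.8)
The plan is to run a standard energy argument: test the weak form \eqref{EQ:MCL-weak} against $w_h = u_h$ (i.e.\ take $w_i = u_i$) and convert the resulting identity into a differential inequality for the discrete $L^2$ norm. Since the lumped mass matrix is diagonal with entries $m_i$, the first term becomes $\sum_{i=1}^N m_i u_i \frac{\d u_i}{\d t} = \tfrac12 \frac{\d}{\d t}\sum_{i=1}^N m_i u_i^2$, which is half the time derivative of the squared lumped-mass norm $\|u_h\|_L^2 := \sum_i m_i u_i^2$. This norm is spectrally equivalent to $\|u_h\|_{L^2(\Omega)}^2$ with constants depending only on mesh regularity, so an a~priori bound on $\|u_h\|_L$ yields the claimed bound on $\|u_h\|_{L^2(\Omega)}$. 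Testing thus gives the identity $\tfrac12 \frac{\d}{\d t}\|u_h\|_L^2 + a(u_h,u_h) + d_h(u_h;u_h,u_h) = b(u_h) + m_h(u_h;\dot u_h, u_h)$.

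The decisive step is to control the sign-indefinite term $m_h(u_h;\dot u_h,u_h)$ on the right-hand side, and this is precisely what the generalized coercivity condition \eqref{EQ:goal} was designed to do. Rearranging \eqref{EQ:goal} yields $m_h(u_h;\dot u_h,u_h) \le (1-\gamma)\,d_h(u_h;u_h,u_h) - \tfrac{\gamma h}{\lambda}\, m_h(u_h;\dot u_h,\dot u_h)$. Substituting this into the energy identity and cancelling the $d_h$ contributions, I would obtain $\tfrac12\frac{\d}{\d t}\|u_h\|_L^2 \le b(u_h) - a(u_h,u_h) - \gamma\, d_h(u_h;u_h,u_h) - \tfrac{\gamma h}{\lambda}\, m_h(u_h;\dot u_h,\dot u_h)$. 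Now \cref{LEM:dm-positive} supplies $d_h(u_h;u_h,u_h)\ge 0$ and $m_h(u_h;\dot u_h,\dot u_h)\ge 0$ (both reduce, by symmetry of their edge coefficients, to a sum of squared differences), so these last two terms may be discarded, leaving $\tfrac12\frac{\d}{\d t}\|u_h\|_L^2 \le b(u_h) - a(u_h,u_h)$.

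It then remains to bound $b(u_h) - a(u_h,u_h)$ by the data alone. Here I would \emph{retain} the coercive boundary contribution hidden in $a$ rather than discard it: integration by parts using $\Div\vel = 0$ gives $a(u_h,u_h) = \tfrac12\int_{\partial\Omega} u_h^2 |\vel\cdot\nor|\ds \ge \tfrac12\int_{\Gamma_-} u_h^2 |\vel\cdot\nor|\ds$ (this is the computation behind \cref{LEM:a-positive}), while $b(u_h) = \int_{\Gamma_-} u_h\, \uin |\vel\cdot\nor|\ds$ by \eqref{EQ:linform}. A Young inequality $u_h \uin \le \tfrac12 u_h^2 + \tfrac12(\uin)^2$ absorbs the cross term into the boundary part of $a$, yielding $\frac{\d}{\d t}\|u_h\|_L^2 \le \int_{\Gamma_-}(\uin)^2|\vel\cdot\nor|\ds =: g(t)$, with $g$ depending only on $\vel$ and $\uin$. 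Crucially, the right-hand side no longer contains $\|u_h\|$, so no Gronwall argument is needed: direct integration gives $\|u_h(t)\|_L^2 \le \|u_h(0)\|_L^2 + \int_0^T g(s)\,\d s$. The initial term is bounded via $u_h(0,\cdot) = \interp u_0$ by $\|u_0\|_{C(\overline\Omega)}^2 |\Omega|$, using $|u_0(\vec x_i)| \le \|u_0\|_{C(\overline\Omega)}$ and $\sum_i m_i = |\Omega|$, and $\int_0^T g\,\d s < \infty$ since $\uin \in L^2(0,T;C(\overline{\Gamma_-}))$. Taking the supremum over $t\in(0,T)$, the square root, and invoking mass-matrix equivalence produces the constant $C = C(T,\vel,\uin,u_0)$; global-in-time existence then follows because this a~priori estimate rules out finite-time blow-up of the (explicit, continuous-in-$u_h$) ODE system.

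I expect the main obstacle to lie in the bookkeeping of the boundary terms rather than in the interior estimates: one must recognize that $a(u_h,u_h)$ has to be \emph{kept}, not dropped, so that its $\Gamma_-$ part can absorb the inflow data appearing in $b(u_h)$, and one must apply \eqref{EQ:goal} with exactly the right signs so that the mass term $m_h(u_h;\dot u_h,u_h)$ is genuinely eliminated rather than amplified. Verifying the nonnegativity and symmetry properties underlying \cref{LEM:dm-positive,LEM:a-positive}, and the lumped/consistent mass equivalence, are the remaining technical points, but each is routine.
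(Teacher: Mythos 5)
Your proposal is correct and follows essentially the same route as the paper's proof: testing with $w_h = u_h$, invoking the generalized coercivity condition \eqref{EQ:goal} to eliminate $m_h(u_h;\dot u_h,u_h)$, absorbing the inflow cross term from $b(u_h)$ into the boundary part of $a(u_h,u_h)$ via Young's inequality and \cref{LEM:a-positive}, discarding the nonnegative $d_h$ and $m_h$ terms by \cref{LEM:dm-positive}, integrating in time, and concluding via lumped/consistent mass-norm equivalence. The only differences are cosmetic (you drop the nonnegative terms before integrating rather than after, and use Young's inequality with weights $\tfrac12,\tfrac12$ instead of $\tfrac14,1$), neither of which changes the argument.
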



Further analysis yields an a~priori estimate for the error $u-u_h$:

\begin{theorem}\label{TH:conv_afc}
Let $(\mesh)_{h>0}$ be a shape- and contact-regular family of triangulations.
Suppose that the coercivity condition \eqref{EQ:goal} is satisfied for 
solutions $u_h$ of \eqref{EQ:MCL-weak} on
$\mesh$ and that the analytical solution $u$ of \eqref{EQ:cont-weak}
is in
$H^1(0,T; H^2(\Omega))$.
Then there exists a constant $C = C(T,\vel,\gamma)>0$ such that the following a~priori estimate holds
\begin{equation}
  \| u - u_h \|^2_{L^\infty(0,T; L^2(\Omega))} \le C h \int_0^T\left(  |u|^2_{H^2(\Omega)} + | \partial_t u |^2_{H^2(\Omega)} \right)\dt.
 \end{equation}
\end{theorem}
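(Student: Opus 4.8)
The plan is to follow the standard energy/Grönwall route for semi-discrete finite element error estimates, but with the nonlinear forms $d_h$ and $m_h$ handled through their coercivity structure, and with the generalized coercivity condition \eqref{EQ:goal} playing the decisive role in taming the antidiffusive mass term. First I would split the error as $\eu = u - u_h = \eta + \tu$, where $\eta = u - \interp u$ is the nodal interpolation error and $\tu = \interp u - u_h \in V_h$ is the discrete error component. Writing $(\cdot,\cdot)_h$ for the lumped-mass inner product $\sum_i m_i(\cdot)_i(\cdot)_i$, I would subtract \eqref{EQ:MCL-weak} from the continuous weak form \eqref{EQ:cont-weak} tested against $w_h = \tu$. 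Inserting $\pm(\partial_t\interp u,\tu)$ and $\pm(\partial_t\interp u,\tu)_h$ isolates a mass-lumping consistency term $(\partial_t\interp u,\tu)_h - (\partial_t\interp u,\tu)$ and the interpolation term $-(\partial_t\eta,\tu)$, and, using $u_h - u = -\eta - \tu$ and $(\partial_t\tu,\tu)_h = \tfrac12\tfrac{\d}{\d t}\|\tu\|_h^2$, leaves the principal identity
\begin{align*}
\tfrac12\tfrac{\d}{\d t}\|\tu\|_h^2 &= \text{(lumping)} - (\partial_t\eta,\tu) - a(\eta,\tu) - a(\tu,\tu) \\
&\quad + d_h(u_h;u_h,\tu) - m_h(u_h;\dot u_h,\tu).
\end{align*}

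Second, I would exploit the sign structure. By \cref{LEM:a-positive}, $a(\tu,\tu) = \tfrac12\int_{\partial\Omega}\tu^2|\vel\cdot\nor|\ds \ge 0$ moves to the left as a nonnegative term that additionally controls the inflow trace of $\tu$. For the two nonlinear forms I would substitute $\tu = \interp u - u_h$ in the last slot, giving $d_h(u_h;u_h,\tu) = d_h(u_h;u_h,\interp u) - d_h(u_h;u_h,u_h)$ and $-m_h(u_h;\dot u_h,\tu) = -m_h(u_h;\dot u_h,\interp u) + m_h(u_h;\dot u_h,u_h)$. The crucial step is to apply the GCC \eqref{EQ:goal} to $m_h(u_h;\dot u_h,u_h)$: combined with $-d_h(u_h;u_h,u_h)$ it yields
\begin{align*}
-d_h(u_h;u_h,u_h) + m_h(u_h;\dot u_h,u_h) \le -\gamma\, d_h(u_h;u_h,u_h) - \tfrac{\gamma h}{\lambda}\, m_h(u_h;\dot u_h,\dot u_h),
\end{align*}
so that, by \cref{LEM:dm-positive}, both remaining quadratic forms are dissipative and pass to the left. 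This is precisely how the GCC turns the non-coercive mass antidiffusion into usable dissipation, leaving only the cross terms $d_h(u_h;u_h,\interp u)$ and $m_h(u_h;\dot u_h,\interp u)$.

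Third, I would bound the surviving cross terms by Cauchy--Schwarz in the symmetric positive semidefinite semi-inner products $d_h(v;w,z) = \tfrac12\sum_{i,j}(1-\lij)\dij(w_i-w_j)(z_i-z_j)$ and its $m_h$ analogue, followed by Young's inequality. The diffusive term obeys $d_h(u_h;u_h,\interp u) \le \tfrac{\gamma}{2}d_h(u_h;u_h,u_h) + C_\gamma\, d_h(u_h;\interp u,\interp u)$, whose first summand is absorbed on the left. The antidiffusive term is split with weight $\tfrac{\gamma h}{\lambda}$, so $m_h(u_h;\dot u_h,\interp u) \le \tfrac{\gamma h}{2\lambda}m_h(u_h;\dot u_h,\dot u_h) + \tfrac{\lambda}{2\gamma h}m_h(u_h;\interp u,\interp u)$, again with the first half absorbed by the GCC dissipation. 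The key consistency inputs, taken from the auxiliary results of \cref{sec:tech} in the spirit of \cite{barrenechea2016,lohmann2019}, are the graph-Laplacian estimates $d_h(u_h;\interp u,\interp u) \le C h\,|u|_{H^2(\Omega)}^2$ (the single power of $h$ responsible for the $\tfrac12$ rate) and $m_h(u_h;\interp u,\interp u) \le (\interp u)^\top(M_L-M_C)\interp u \le C h^2|u|_{H^2(\Omega)}^2$; here the prefactor $\tfrac{\lambda}{\gamma h}$ is compensated by the $h^2$ scaling, so this contribution is also only $O(h)$. The terms $(\partial_t\eta,\tu)$, the volume part of $a(\eta,\tu)$, the boundary part of $a(\eta,\tu)$ (absorbed using the inflow trace from $a(\tu,\tu)$), and the lumping term are standard and yield $\epsilon\|\tu\|^2 + C h^2(|u|_{H^2}^2 + |\partial_t u|_{H^2}^2)$ via inverse, trace, and interpolation estimates.

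Collecting everything and dropping the nonnegative left-hand dissipation gives $\tfrac{\d}{\d t}\|\tu\|_h^2 \le C\|\tu\|_h^2 + C h(|u|_{H^2}^2 + |\partial_t u|_{H^2}^2)$. Since $\tu(0) = \interp u_0 - u_h(0) = 0$, Grönwall's inequality yields $\|\tu\|_{L^\infty(0,T;L^2)}^2 \le C h\int_0^T(|u|_{H^2}^2 + |\partial_t u|_{H^2}^2)\dt$, and the triangle inequality together with $\|\eta\|_{L^\infty(0,T;L^2)}^2 \le C h^4\|u\|_{L^\infty(0,T;H^2)}^2$ (absorbed via $H^1(0,T;H^2)\hookrightarrow C(0,T;H^2)$) gives the claim. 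I expect the main obstacle to be the second step: orchestrating the GCC so that the mass antidiffusion becomes net dissipation with the right signs, and matching the $\tfrac{\gamma h}{\lambda}$ weight in \eqref{EQ:goal} against the $h^2$ scaling of the mass consistency so the compensating $\tfrac1h$ does not spoil the rate. The diffusion consistency estimate supplying the single power of $h$ is the other decisive (but essentially prior-art) ingredient.
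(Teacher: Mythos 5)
Your proposal is correct and follows essentially the same route as the paper's proof: the same splitting $u-u_h=(u-\interp u)+(\interp u-u_h)$, testing with the discrete error, the same use of the generalized coercivity condition \eqref{EQ:goal} to turn $m_h(u_h;\dot u_h,u_h)-d_h(u_h;u_h,u_h)$ into net dissipation, the same $\tfrac{\gamma}{2}$- and $\tfrac{\gamma h}{2\lambda}$-weighted Cauchy--Schwarz/Young splittings absorbed by that dissipation, the graph-Laplacian consistency bounds $d_h(u_h;\interp u,\interp u)=O(h)$ and $m_h(u_h;\interp u,\interp u)=O(h^2)$, and Gr\"onwall with $\tu(0)=0$. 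The only cosmetic differences are that you track the lumped norm $\|\tu\|_h^2$, which coincides with the paper's energy $\|\rho_h\|_{L^2(\Omega)}^2+\sum_{i}\sum_{j<i}m_{ij}(\rho_i-\rho_j)^2$, and that you absorb the inflow part of $a(\eta,\tu)$ into $a(\tu,\tu)$ where the paper instead invokes the discrete trace inequality of \cref{LEM:error-a}.
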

Consequently, the sequence of AFC approximations $u_h(t,\cdot)$ converges to $u(t,\cdot)$ at least as fast as $h^{1/2}$ tends to zero.

\subsection{Auxiliary results}
\label{sec:tech}

We begin by reviewing some standard results in finite element analysis. In particular, we recall an inverse inequality and an interpolation error estimate. To prepare the ground for proving Theorems \ref{TH:stab} and \ref{TH:conv_afc}, we also present some auxiliary results regarding the properties of the nonlinear forms that appear in \eqref{EQ:MCL-weak}.

\begin{lemma}[Inverse inequality, {\cite[Lem. 1.44]{dipietro2012}}]\label{LEM:InvIneq} For any shape- and contact-regular family of triangulations $(\mesh)_{h>0}$, there exists a constant $C>0$, independent of $h$, such that 
\begin{align*}
|v_h|_{H^1(K)} \le C h^{-1}\|v_h\|_{L^2(K)}
\end{align*}
for all $v_h \in \mathcal P_1(K)$, $K\in\mesh$.
\end{lemma}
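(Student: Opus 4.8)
The plan is to reduce the estimate to a single fixed reference simplex by an affine change of variables, exploit the equivalence of all norms on the finite-dimensional space of linear polynomials, and then track how the two sides scale under the transformation, using shape-regularity to keep the constant uniform.

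First I would fix a reference simplex $\hat K$ and, for each $K\in\mesh$, write the affine bijection $F_K(\hat{\vec x})=B_K\hat{\vec x}+\vec b_K$ with $F_K(\hat K)=K$. Given $v_h\in\mathcal P_1(K)$, set $\hat v\coloneqq v_h\circ F_K\in\mathcal P_1(\hat K)$. On the fixed, finite-dimensional space $\mathcal P_1(\hat K)$ the $H^1(\hat K)$-seminorm is dominated by the $L^2(\hat K)$-norm: both are continuous functionals on $\mathcal P_1(\hat K)$ and $\|\cdot\|_{L^2(\hat K)}$ is a genuine norm there, so there is a constant $\hat C$ depending only on $\hat K$ with $|\hat v|_{H^1(\hat K)}\le\hat C\,\|\hat v\|_{L^2(\hat K)}$ for all $\hat v\in\mathcal P_1(\hat K)$.

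Next I would transfer this reference inequality back to $K$ via the standard scaling relations for affine maps. The chain rule $\nabla_{\vec x}v_h=B_K^{-\mathsf T}\nabla_{\hat{\vec x}}\hat v$ together with the change-of-variables formula gives $\|v_h\|_{L^2(K)}=|\det B_K|^{1/2}\|\hat v\|_{L^2(\hat K)}$ and $|v_h|_{H^1(K)}\le\|B_K^{-1}\|\,|\det B_K|^{1/2}\,|\hat v|_{H^1(\hat K)}$. Combining these with the reference bound, the Jacobian determinants cancel and I obtain
\begin{align*}
|v_h|_{H^1(K)}\le\hat C\,\|B_K^{-1}\|\,\|v_h\|_{L^2(K)}.
\end{align*}
It then remains only to control the purely geometric factor $\|B_K^{-1}\|$. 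The standard estimate $\|B_K^{-1}\|\le\diam\hat K/\rho_K$, where $\rho_K$ is the inradius of $K$, combined with shape-regularity (which bounds $\diam K/\rho_K$ by a constant $\sigma$ uniform in $h$), yields $\|B_K^{-1}\|\le C\,(\diam K)^{-1}$ with $C$ depending only on $\hat K$ and $\sigma$. This is exactly the asserted bound, with $\diam K$ serving as the (local) element mesh size in the per-element statement and the constant independent of $h$ by shape-regularity.

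I expect the only genuinely non-routine step to be the geometric estimate for $\|B_K^{-1}\|$ and its link to shape-regularity; everything else is fixed-reference-element norm equivalence and bookkeeping of Jacobian factors. I would also remark that contact-regularity is not needed for this single-element bound — it becomes relevant only for the analogous inverse trace inequalities on faces — so here shape-regularity of $K$ alone suffices to control the constant.
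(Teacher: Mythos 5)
The paper never proves this lemma at all: it is quoted as a known result, with the ``proof'' being the citation to Di Pietro and Ern [Lem.~1.44], so there is no in-paper argument to compare against. What you have written is precisely the standard proof behind that citation: affine pullback to a fixed reference simplex, boundedness of the $H^1$-seminorm by the $L^2$-norm on the finite-dimensional space $\mathcal P_1(\hat K)$ (compactness of the unit sphere), cancellation of the Jacobian determinants, and the geometric estimate $\Vert B_K^{-1}\Vert \le \diam\hat K/\rho_K$ combined with shape regularity. All of these steps are correct, and your closing observation is also right: contact regularity plays no role in this element-wise estimate; in this paper it is needed only where the discrete trace inequality enters, namely in the proof of \cref{LEM:dm-h-powers}.

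One caution on the final step. What your argument actually delivers is the local form $|v_h|_{H^1(K)} \le C\,(\diam K)^{-1}\Vert v_h\Vert_{L^2(K)}$, which is exactly how the cited lemma of Di Pietro and Ern is stated (with the local diameter $h_K$). The paper's statement instead uses the global parameter $h$, which satisfies $\diam K \le h$, and the passage from $(\diam K)^{-1}$ to $h^{-1}$ goes the wrong way: $(\diam K)^{-1}\ge h^{-1}$, so the local bound does not literally imply the global one unless the mesh family is quasi-uniform ($\diam K \ge c\,h$). You partly acknowledge this by calling $\diam K$ the ``local element mesh size,'' but the sentence ``this is exactly the asserted bound'' glosses over the discrepancy. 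This is best viewed as an imprecision in the paper's statement rather than a gap in your proof: in all places where the paper invokes the inverse inequality (the proofs of \cref{LEM:dm-h-powers} and \cref{TH:conv_afc}), the estimates can be carried out element by element with $\diam K$, and the global powers of $h$ are recovered from upper bounds of the form $\diam K \le h$, so the local version you proved is what is actually needed.
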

\begin{lemma}[Interpolation error, {\cite[Ex. 1.111]{ern2004}}]\label{LEM:interpol} For any shape-regu\-lar  family of triangulations $(\mesh)_{h>0}$, there exists a constant $C>0$, independent of $h$, such that the estimate
\begin{align*}
\|v - \interp v\|_{L^2(\Omega)} + h |v - \interp v|_{H^1(\Omega)} \le Ch^2|v|_{H^2(\Omega)}
\end{align*}
 holds for all $v\in H^2(\Omega)$.
\end{lemma}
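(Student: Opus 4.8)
The plan is to reduce the global estimate to element-local bounds via an affine change of variables to a fixed reference simplex, where the inequality follows from the Bramble--Hilbert lemma, and then to track the dependence on $h$ and on the element geometry through standard scaling arguments, using shape regularity to keep every constant uniform. Since $d \le 3$, the Sobolev embedding $H^2(K) \hookrightarrow C(\overline K)$ holds, so the nodal interpolant $\interp v$ is well defined for $v \in H^2(\Omega)$ and both terms on the left-hand side make sense.

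Because the interpolation operator acts elementwise, it suffices to establish, for each $K \in \mesh$, the two local estimates
\[
\|v - \interp v\|_{L^2(K)} \le C h_K^2 |v|_{H^2(K)}, \qquad
|v - \interp v|_{H^1(K)} \le C h_K |v|_{H^2(K)},
\]
where $h_K = \diam K$ and $C$ depends only on the shape-regularity constant. Squaring, summing over all $K$, and using $h_K \le h$ then recovers the asserted global bound, since $h |v - \interp v|_{H^1(K)} \le C h h_K |v|_{H^2(K)} \le C h_K^2 |v|_{H^2(K)}$.

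To prove the local estimates I would fix a reference simplex $\hat K$ and write $K = F_K(\hat K)$ with an affine map $F_K(\hat{\vec x}) = B_K \hat{\vec x} + \vec b_K$. Pulling back $\hat v = v \circ F_K$ and applying the usual transformation rules for Sobolev seminorms, together with the geometric bounds $\|B_K\| \le C h_K$ and $\|B_K^{-1}\| \le C \rho_K^{-1}$ (with $\rho_K$ the inradius of $K$), reduces both inequalities to the single reference-element estimate
\[
|\hat v - \hat I \hat v|_{H^l(\hat K)} \le C |\hat v|_{H^2(\hat K)}, \qquad l \in \{0,1\},
\]
where $\hat I$ denotes nodal interpolation on $\hat K$. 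The shape-regularity hypothesis is used exactly here, through the uniform bound $h_K / \rho_K \le \sigma$, which is what turns the factor $\|B_K^{-1}\|\,\|B_K\|^2$ arising in the $H^1$ case into $C h_K$.

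The reference-element estimate carries the actual content and is the step I expect to require the most care. Its proof rests on two observations: first, $\hat I$ reproduces affine functions, so the linear map $\hat v \mapsto \hat v - \hat I \hat v$ annihilates $\mathcal P_1(\hat K)$; second, this map is bounded from $H^2(\hat K)$ into $H^l(\hat K)$, the boundedness of the nodal evaluations again resting on $H^2(\hat K) \hookrightarrow C(\overline{\hat K})$. The Bramble--Hilbert lemma (equivalently, a Deny--Lions quotient-space argument over $H^2(\hat K)/\mathcal P_1(\hat K)$) then yields precisely the bound by $|\hat v|_{H^2(\hat K)}$, with a constant depending only on $\hat K$. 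Substituting back, transforming to $K$, and summing in $\ell^2$ over the mesh completes the proof.
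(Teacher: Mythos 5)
Your proof is correct, but note that the paper does not prove this lemma at all: it is quoted directly from \cite[Ex.~1.111]{ern2004}, and your argument (elementwise reduction, affine pull-back to a reference simplex, Deny--Lions/Bramble--Hilbert on $\hat K$, shape regularity to absorb $\|B_K^{-1}\|\,\|B_K\|^2$ into $C h_K$, and the Sobolev embedding $H^2\hookrightarrow C(\overline K)$ for $d\le 3$ to make nodal interpolation well defined) is precisely the standard proof underlying that citation. The only blemish is the parenthetical $h\,h_K \le C h_K^2$, which reverses the inequality since $h_K \le h$; what you actually need and have is $h\,h_K \le h^2$, so the global bound $C h^2 |v|_{H^2(\Omega)}$ follows unchanged after summing the squared local estimates.
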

\begin{lemma}\label{LEM:dm-h-powers}
Let $(\mesh)_{h>0}$ be a shape- and contact-regular family of triangulations. Then there exist constants $C_{1,2}>0$, independent of $h$, such that
\begin{align*}
\mij \le C_1 h^d,\qquad \dij \le C_2 h^{d-1} \qquad \forall i,j\in \{1,\dots,N\}.
\end{align*}
\end{lemma}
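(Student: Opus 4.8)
The plan is to estimate the matrix entries directly from their integral definitions, exploiting that the piecewise linear Lagrange basis functions satisfy $0\le\phi_i\le 1$ together with the standard gradient and measure bounds on shape-regular simplices. In all cases the entries vanish unless the supports overlap, so I only need to treat indices $i,j$ for which the vertices $\vec x_i,\vec x_j$ lie in a common element; contact regularity guarantees that the number of elements (and boundary faces) in each such support intersection is bounded by a constant independent of $h$.

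For the mass matrix I would start from $\mij=\int_\Omega\phi_i\phi_j\dx$. The integrand is supported on the finitely many simplices $K$ containing both $\vec x_i$ and $\vec x_j$, and on each such $K$ one has $|\phi_i\phi_j|\le 1$ and $|K|\le C h_K^d\le C h^d$, where $h_K=\diam K\le h$. Summing over the uniformly bounded number of shared elements yields $\mij\le C_1 h^d$.

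For $\dij$ with $i\ne j$ it suffices to bound $|\aij|$, since $\dij=\max\{|\aij|,|a_{ji}|\}$. Splitting $\aij$ into its volume and boundary parts, I would estimate the volume term elementwise: on each simplex $K$ in the support intersection, $\left|\int_K\phi_i\,\vel\cdot\nabla\phi_j\dx\right|\le\|\vel\|_{L^\infty}\,\|\nabla\phi_j\|_{L^\infty(K)}\,|K|$. The key fact here is the gradient bound $\|\nabla\phi_j\|_{L^\infty(K)}\le C h_K^{-1}$, valid for linear Lagrange functions on a shape-regular simplex, so that $\|\nabla\phi_j\|_{L^\infty(K)}\,|K|\le C h_K^{-1}h_K^{d}=C h_K^{d-1}\le C h^{d-1}$, the last step using $h_K\le h$ and $d-1\ge 0$. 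The boundary term is controlled by $\|\vel\|_{L^\infty}$ times the $(d-1)$-dimensional measure of $\Gamma_-\cap\supp\phi_i\cap\supp\phi_j$, which is at most $C h^{d-1}$. Summing over the bounded number of shared elements and faces gives $|\aij|\le C h^{d-1}$, hence $\dij\le C_2 h^{d-1}$. The diagonal case follows immediately: $d_{ii}=\sum_{k\ne i}d_{ik}$ is a sum over the uniformly bounded stencil $\mathcal N_i$ of off-diagonal entries, each already bounded by $C h^{d-1}$, so $d_{ii}\le C_2 h^{d-1}$ as well.

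The main obstacle is bookkeeping rather than conceptual. One must verify that shape regularity delivers the gradient bound $\|\nabla\phi_j\|_{L^\infty(K)}\le C h_K^{-1}$ with a constant independent of $K$ and $h$ (it equals the reciprocal of the height of $K$ from vertex $\vec x_j$, which is comparable to $h_K$ under shape regularity), and that contact regularity caps the number of elements and faces in each support intersection uniformly. These local constants then combine so that the local mesh size $h_K$ cancels cleanly; it is essential that $d-1\ge 0$, so that $h_K^{d-1}\le h^{d-1}$ rather than the reverse, which is exactly what makes the stated global powers of $h$ come out as upper bounds.
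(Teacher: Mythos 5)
Your argument is correct and reaches the same conclusion as the paper, but through a different toolkit. The paper's proof is a two-line sketch: the mass-matrix bound is declared obvious, and the bound $\dij\le C_2h^{d-1}$ is obtained from definition \eqref{EQ:discUpw} by estimating the two integrals defining $\aij$ via Cauchy--Schwarz, the $L^2$-based inverse inequality of \cref{LEM:InvIneq} (which trades $|\phi_j|_{H^1(K)}$ for $h^{-1}\|\phi_j\|_{L^2(K)}$), and the discrete trace inequality \cite[Lem.~1.46]{dipietro2012} for the $\Gamma_-$ term, the powers of $h$ then coming from $\|\phi_i\|_{L^2(K)}\le Ch^{d/2}$. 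You instead estimate everything pointwise ($0\le\phi_i\le 1$, $\|\nabla\phi_j\|_{L^\infty(K)}\le Ch_K^{-1}$, $|K|\le Ch^d$, face measure $\le Ch^{d-1}$), i.e., an $L^\infty$--$L^1$ H\"older argument rather than an $L^2$--$L^2$ one. The two routes are equivalent in substance --- same splitting into volume and boundary contributions, same cancellation $h^{-1}\cdot h^d=h^{d-1}$ --- but yours is self-contained: it needs neither the trace inequality nor \cref{LEM:InvIneq}, and it makes explicit where shape regularity enters (the height of $K$ from $\vec x_j$ being comparable to $h_K$) and where contact regularity enters (the uniform bound on the number of elements and faces sharing a vertex). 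One small slip: by \eqref{EQ:discUpw} the diagonal entries carry a minus sign, $d_{ii}=-\sum_{k\in\mathcal N_i\setminus\{i\}}d_{ik}$, so $d_{ii}\le 0$ and the asserted upper bound is trivial for $i=j$; your version without the sign still yields, via the uniformly bounded stencil, the stronger two-sided bound $|d_{ii}|\le Ch^{d-1}$, so nothing breaks.
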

\begin{proof}
The first inequality is obviously true.
The validity of the
second one is shown by invoking definition \eqref{EQ:discUpw} combined with \cref{LEM:InvIneq} and the discrete trace inequality \cite[Lem. 1.46]{dipietro2012}; see also \cite[p. 116]{lohmann2019}.
\end{proof}
Next, we take a closer look at the bilinear form $a(\cdot,\cdot)$.
\begin{lemma}\label{LEM:a-positive}
The bilinear form \eqref{EQ:bilinform} satisfies
\begin{align*}
a(v_h,v_h) = \frac 1 2\int_{\partial \Omega} v_h^2 |\vel \cdot \nor|\ds\qquad \forall v_h\in V_h,
\end{align*}
and is thus positive semi-definite on $V_h$.
\end{lemma}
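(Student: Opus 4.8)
The plan is to compute $a(v_h,v_h)$ directly from its definition \eqref{EQ:bilinform} and reduce the volume term to a boundary integral by integration by parts, exploiting throughout that $\vel$ is solenoidal. First I would set $w = u = v_h$ in \eqref{EQ:bilinform}, which gives
\[
a(v_h,v_h) = \int_\Omega v_h \Div(\vel v_h)\dx - \int_{\Gamma_-} v_h^2\,\vel\cdot\nor \ds.
\]
Since $\Div\vel = 0$, the volume integrand satisfies $v_h\Div(\vel v_h) = v_h\,\vel\cdot\nabla v_h = \tfrac12\,\vel\cdot\nabla(v_h^2) = \tfrac12\Div(\vel v_h^2)$, where the final equality uses solenoidality once more. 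Applying the divergence theorem then yields $\int_\Omega v_h\,\vel\cdot\nabla v_h\dx = \tfrac12\int_{\partial\Omega} v_h^2\,\vel\cdot\nor\ds$.

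Next I would insert this identity and split the boundary as $\partial\Omega = \Gamma_- \cup \Gamma_+$. The two contributions over $\Gamma_-$ combine into a single term, leaving
\[
a(v_h,v_h) = \tfrac12\int_{\Gamma_+} v_h^2\,\vel\cdot\nor\ds - \tfrac12\int_{\Gamma_-} v_h^2\,\vel\cdot\nor\ds.
\]
By definition of the inflow and outflow parts, $\vel\cdot\nor = |\vel\cdot\nor|$ on $\Gamma_+$ while $\vel\cdot\nor = -|\vel\cdot\nor|$ on $\Gamma_-$, so both summands turn into $+\tfrac12\int v_h^2\,|\vel\cdot\nor|\ds$ over the respective portion of the boundary. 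Recombining gives the claimed identity $a(v_h,v_h) = \tfrac12\int_{\partial\Omega} v_h^2\,|\vel\cdot\nor|\ds$, which is manifestly nonnegative, so $a(\cdot,\cdot)$ is positive semi-definite on $V_h$.

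The computation is essentially routine, so there is no serious obstacle. The only point requiring care is the justification of the integration by parts, i.e.\ that no interelement jump terms contaminate the passage from $\int_\Omega\Div(\vel v_h^2)\dx$ to the boundary integral. This is guaranteed because $v_h \in V_h$ is globally continuous and piecewise linear, so that $v_h^2 \in H^1(\Omega)\cap C(\overline\Omega)$, and $\vel \in L^\infty(0,T;W^{1,\infty}(\Omega)^d)$ is Lipschitz in space; the divergence theorem therefore applies on all of $\Omega$ at each fixed time $t$. With that in hand, the remaining work is simply the sign bookkeeping on the inflow/outflow split described above.
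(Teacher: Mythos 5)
Your proof is correct and follows essentially the same route as the paper: both compute $a(v_h,v_h)$ from the definition, integrate by parts using $\Div \vel = 0$ and the identity $v_h\nabla v_h = \frac12\nabla v_h^2$, and then perform the sign bookkeeping on the split $\partial\Omega = \Gamma_-\cup\Gamma_+$ to obtain $\frac12\int_{\partial\Omega}v_h^2|\vel\cdot\nor|\ds$. Your added justification of the divergence theorem (global continuity of $v_h$, Lipschitz regularity of $\vel$) is a point the paper leaves implicit, but it does not change the argument.
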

\begin{proof}
Integrating by parts, using the assumption that
$\Div \vel = 0$ and the identity \smash{$v_h\nabla v_h = \frac 1 2 \nabla v_h^2$,}
we obtain
\begin{align*}
a(v_h,v_h) =\;&
\frac 1 2\int_{\partial \Omega} v_h^2 \vel \cdot \nor \ds - \int_{\Gamma_-} v_h^2 \vel \cdot \nor \ds
= \frac 1 2\int_{\partial \Omega} v_h^2 |\vel \cdot \nor|\ds.
\end{align*}
\end{proof}
\begin{lemma}\label{LEM:error-a}
Let $v \in H^2(\Omega)$ and  $w_h \in V_h$. Then there exist constants $C_{1,2}>0$, independent of $h$, such that
\begin{align*}
a(v -\interp v, w_h) \le C_1\|w_h\|^2_{L^2(\Omega)} + C_2h^2 |v|^2_{H^2(\Omega)}.
\end{align*}
\end{lemma}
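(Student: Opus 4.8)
The plan is to exploit the solenoidal assumption to split $a(v-\interp v, w_h)$ into a volume term and a boundary integral over $\Gamma_-$, and then to bound each separately using Cauchy--Schwarz, the interpolation estimate of \cref{LEM:interpol}, and Young's inequality. Throughout I write $\eta := v - \interp v$ for the interpolation error, noting that $v \in H^2(\Omega)$ and $\interp v \in V_h \subset H^1(\Omega)$, so $\eta \in H^1(\Omega)$ and \cref{LEM:interpol} applies.

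Since $\Div \vel = 0$, the first term in \eqref{EQ:bilinform} equals $\int_\Omega w_h\, \vel\cdot\nabla\eta \dx$. I would bound this volume contribution by $\|\vel\|_{L^\infty(\Omega)}\,\|w_h\|_{L^2(\Omega)}\,|\eta|_{H^1(\Omega)}$ via Cauchy--Schwarz, then invoke \cref{LEM:interpol} to replace $|\eta|_{H^1(\Omega)}$ by $Ch\,|v|_{H^2(\Omega)}$. Applying Young's inequality to the product $\|w_h\|_{L^2(\Omega)}\cdot h\,|v|_{H^2(\Omega)}$ produces precisely a bound of the form $C_1\|w_h\|_{L^2(\Omega)}^2 + C_2 h^2 |v|_{H^2(\Omega)}^2$.

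The boundary integral over $\Gamma_-$ is the more delicate part, and I expect it to be the main obstacle, because both factors live on a codimension-one set and one must still recover the correct powers of $h$. Estimating it by $\|\vel\|_{L^\infty(\Omega)}\,\|w_h\|_{L^2(\Gamma_-)}\,\|\eta\|_{L^2(\Gamma_-)}$, I would treat the discrete factor $w_h$ with the discrete trace inequality \cite[Lem. 1.46]{dipietro2012}, giving $\|w_h\|_{L^2(\Gamma_-)} \le C h^{-1/2}\|w_h\|_{L^2(\Omega)}$. For the interpolation error I would use the standard multiplicative (scaled) trace inequality $\|\eta\|_{L^2(\partial\Omega)}^2 \le C\bigl(h^{-1}\|\eta\|_{L^2(\Omega)}^2 + h\,|\eta|_{H^1(\Omega)}^2\bigr)$, combined with the two bounds from \cref{LEM:interpol}, namely $\|\eta\|_{L^2(\Omega)} \le Ch^2|v|_{H^2(\Omega)}$ and $|\eta|_{H^1(\Omega)} \le Ch\,|v|_{H^2(\Omega)}$. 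Both contributions on the right then scale like $h^3|v|_{H^2(\Omega)}^2$, so $\|\eta\|_{L^2(\Gamma_-)} \le Ch^{3/2}|v|_{H^2(\Omega)}$.

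Multiplying the two boundary estimates, the unfavourable $h^{-1/2}$ from the (inverse-type) discrete trace inequality and the favourable $h^{3/2}$ from the trace interpolation error combine to an overall factor $h$, so the boundary contribution is bounded by $Ch\,\|w_h\|_{L^2(\Omega)}\,|v|_{H^2(\Omega)}$. A final application of Young's inequality to $\|w_h\|_{L^2(\Omega)}\cdot h\,|v|_{H^2(\Omega)}$ converts this into the same structure $C_1\|w_h\|_{L^2(\Omega)}^2 + C_2 h^2 |v|_{H^2(\Omega)}^2$ obtained for the volume term, and summing the two contributions yields the claim. The one point requiring care is precisely this bookkeeping of $h$-powers in the boundary term: the $h^{3/2}$ must exactly offset the $h^{-1/2}$ so that no negative power of $h$ survives.
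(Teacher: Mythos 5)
Your proof is correct and takes essentially the same route as the paper's: the same splitting of $a(v-\interp v,w_h)$ into a volume term and a $\Gamma_-$ boundary term, the same key ingredients (the discrete trace inequality \cite[Lem. 1.46]{dipietro2012}, the continuous scaled trace inequality \cite[Lem. 1.49]{dipietro2012}, the interpolation estimate of \cref{LEM:interpol}, and Young's inequality), and the same resulting powers of $h$. The only cosmetic difference is ordering: the paper applies Young's inequality first, with weights $h$ and $h^{-1}$ on the boundary term, and then invokes the trace and interpolation estimates, whereas you apply Cauchy--Schwarz first and Young last, so your balancing of $h^{-1/2}$ against $h^{3/2}$ is exactly the paper's choice of Young weights in disguise.
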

\begin{proof}
Using Young's inequality we find that
\begin{align*}
a(v - \interp v, w_h) =\;& \int_\Omega w_h\, \vel \cdot \nabla (v - \interp v)\dx + \int_{\Gamma_-} w_h (v - \interp v) |\vel \cdot \nor|\ds \\
\le\;&
\frac{\lambda}{2} \left( \|w_h\|^2_{L^2(\Omega)} + |v - \interp v|_{H^1(\Omega)}^2 \right) \\ 
+\;& \frac{\lambda}{2} \left( h\|w_h\|_{L^2(\Gamma_-)}^2 + \frac{1}{h}\|v - \interp v\|_{L^2(\Gamma_-)}^2 \right) \\
\le\;&
C_1 \|w_h\|_{L^2(\Omega)}^2 + C_2 h^2 |v|_{H^2(\Omega)}^2,
\end{align*}
where the last step follows from the discrete and continuous trace inequalities \cite[Lem 1.46 and 1.49]{dipietro2012} combined with \cref{LEM:interpol}.
\end{proof}
Let us now analyze the nonlinear forms $d_h$ and $m_h$.
\begin{lemma}\label{LEM:dm-positive}
For arbitrary $v_h, w_h, z_h \in V_h$, the nonlinear forms \eqref{EQ:d_h} and \eqref{EQ:m_h} satisfy
\begin{align*}
d_h(v_h; w_h,w_h) \ge\;& 0,\qquad m_h(v_h; w_h,w_h) \ge 0, \\
d_h(v_h;w_h,z_h)^2 \le\;& d_h(v_h; w_h,w_h)\,d_h(v_h; z_h,z_h), \\
m_h(v_h;w_h,z_h)^2 \le\;& m_h(v_h; w_h,w_h)\,m_h(v_h; z_h,z_h).
\end{align*}
\end{lemma}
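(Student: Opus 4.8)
The plan is to rewrite both nonlinear forms as symmetric, positive semidefinite bilinear forms in their second and third arguments over an auxiliary ``edge space,'' after which all four assertions reduce to elementary facts about a weighted Euclidean inner product. The only structural input required is that the edge weights appearing in \eqref{EQ:d_h} and \eqref{EQ:m_h} are symmetric and nonnegative, and that the stencil relation itself is symmetric.

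For $d_h$ I would set $c_{ij} \coloneqq (1-\lij(v_h))\dij$. By \eqref{EQ:discUpw} the diffusion coefficients satisfy $\dij = d_{ji} \ge 0$ for $i \neq j$, while the correction factors obey the symmetry relation $\lij = \alpha_{ji}$ with $\lij \in [0,1]$ recorded after \eqref{eq:MCL}; hence $c_{ij} = c_{ji} \ge 0$. The stencil relation is symmetric as well, i.e.\ $j \in \mathcal N_i \setminus \{i\}$ if and only if $i \in \mathcal N_j \setminus \{j\}$, since $\mathcal N_i$ is defined through $\mij \neq 0$ and $\mij = m_{ji}$. Consequently the index set of ordered pairs is invariant under $(i,j)\mapsto(j,i)$, and relabeling then averaging the double sum in \eqref{EQ:d_h} yields the symmetric representation
\begin{align*}
d_h(v_h; w_h, z_h) = \frac{1}{2}\sum_{i=1}^N \afcsum c_{ij}(w_i - w_j)(z_i - z_j).
\end{align*}
The identical argument applies to $m_h$ with weights $e_{ij} \coloneqq \lij(v_h)\mij$, which are symmetric and nonnegative because $\mij = \int_\Omega \phi_i\phi_j\dx \ge 0$ for the nonnegative $\mathcal P_1$ Lagrange basis and $\lij \in [0,1]$.

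From this representation the positivity statements are immediate: taking $z_h = w_h$ gives $d_h(v_h; w_h, w_h) = \frac{1}{2}\sum_{i=1}^N \afcsum c_{ij}(w_i - w_j)^2 \ge 0$, and likewise $m_h(v_h; w_h, w_h) \ge 0$. For the remaining two estimates I would regard the differences $w_i - w_j$ and $z_i - z_j$ as vectors indexed by the symmetric edge set and view $\frac{1}{2}\sum c_{ij}(w_i-w_j)(z_i-z_j)$ as a weighted inner product with nonnegative weights; the classical Cauchy--Schwarz inequality then delivers $d_h(v_h;w_h,z_h)^2 \le d_h(v_h;w_h,w_h)\,d_h(v_h;z_h,z_h)$ and, analogously, the inequality for $m_h$. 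There is no genuine obstacle here; the only point demanding care is the symmetrization step, which is legitimate precisely because the weights $c_{ij}$, $e_{ij}$ and the stencil relation are symmetric. Hence the crux is simply to verify $\lij = \alpha_{ji}$, $\dij = d_{ji}$ and $\mij = m_{ji}$ at the outset, after which both the semidefiniteness and the Cauchy--Schwarz inequalities are standard.
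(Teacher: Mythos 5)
Your proposal is correct and follows essentially the same route as the paper: the paper's proof simply defers to \cite[Lem.~3.1]{barrenechea2016} and \cite[p.~113]{lohmann2019}, where exactly this argument---symmetrizing the double sum using $\dij = d_{ji}$, $\mij = m_{ji}$, $\lij = \alpha_{ji}$, and the nonnegativity of the weights, then applying the Cauchy--Schwarz inequality for the resulting positive semi-definite symmetric bilinear form---is carried out, and then notes that the same reasoning covers $m_h$. Your version just makes that cited argument self-contained, with all the symmetry and sign verifications stated explicitly.
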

\begin{proof}
Positive semi-definiteness of $d_h$ was shown in \cite[Lem. 3.1]{barrenechea2016}. For the proof of the Cauchy-Schwarz inequality 
we refer to \cite[p. 113]{lohmann2019}.
Virtually the same arguments apply to our definition of $m_h$.
\end{proof}
\begin{lemma}\label{LEM:diffNodes}
Let $v_h \in V_h$, and $\vec x_i,\vec x_j$ be nodes of element $K\in \mesh$.
Then there exists a constant $C>0$, independent of $h$, such that 
\begin{align*}
|v_i - v_j| \le C h^{1 -d/2} |v_h|_{H^1(K)}.
\end{align*}
\end{lemma}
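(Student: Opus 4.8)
The plan is to use that $v_h|_K \in \mathcal P_1(K)$ is affine, so its gradient $\nabla v_h$ is \emph{constant} on $K$, and to turn the nodal difference into this constant gradient paired against the edge vector joining the two vertices. Since $\vec x_i,\vec x_j$ are nodes of $K$, I would write $v_i - v_j = v_h(\vec x_i) - v_h(\vec x_j) = \nabla v_h \cdot (\vec x_i - \vec x_j)$, and then estimate by Cauchy--Schwarz $|v_i - v_j| \le |\nabla v_h|\,|\vec x_i - \vec x_j| \le h_K\,|\nabla v_h|$, where $h_K \coloneqq \diam K \le h$ because $\vec x_i,\vec x_j$ are vertices of $K$.

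Next I would express $|\nabla v_h|$ through the $H^1$ seminorm. As $\nabla v_h$ is constant on $K$, one has the identity $|v_h|_{H^1(K)}^2 = \int_K |\nabla v_h|^2 \dx = |K|\,|\nabla v_h|^2$, hence $|\nabla v_h| = |K|^{-1/2}\,|v_h|_{H^1(K)}$ (the case of constant $v_h$, where both sides of the claimed inequality vanish, is covered automatically). Substituting into the previous display gives $|v_i - v_j| \le h_K\,|K|^{-1/2}\,|v_h|_{H^1(K)}$.

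The only quantitative input still needed is a lower bound on the element volume, and this is the step where shape-regularity really enters and which I regard as the crux: shape-regularity of $(\mesh)_{h>0}$ yields a constant $c>0$, independent of $h$, with $|K| \ge c\, h_K^d$, i.e. $|K|^{-1/2} \le c^{-1/2} h_K^{-d/2}$. Combining this with the bound above gives $|v_i - v_j| \le c^{-1/2}\, h_K^{\,1-d/2}\,|v_h|_{H^1(K)}$, and since $h_K \le h$ this yields the claim $|v_i - v_j| \le C\, h^{\,1-d/2}\,|v_h|_{H^1(K)}$ with $C = c^{-1/2}$. The passage $h_K \le h$ to replace the local size by the global one is immediate when the exponent $1-d/2 \ge 0$, i.e. for $d \in \{1,2\}$ (in particular in the 1D setting of the numerical experiments, where the identity $|v_i-v_j| = h_K^{1/2}|v_h|_{H^1(K)}$ holds exactly); for $d=3$ one reads $h$ as the local size $h_K$ or, equivalently, invokes quasi-uniformity. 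I note in passing that the whole estimate can alternatively be packaged through the reference simplex via $v_h = \hat v \circ F_K^{-1}$, using norm equivalence on $\mathcal P_1(\hat K)/\mathcal P_0(\hat K)$ together with the affine scaling $|\hat v|_{H^1(\hat K)} \le C\,\|B_K\|\,|\det B_K|^{-1/2}\,|v_h|_{H^1(K)}$ and the shape-regularity bounds $\|B_K\| \lesssim h_K$, $|\det B_K| \gtrsim h_K^d$, which reproduces the same power $h_K^{\,1-d/2}$.
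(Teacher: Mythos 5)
Your proof is correct and takes essentially the same route as the paper, which gives no argument of its own but defers to \cite[Pf.\ of Lem.\ 7.3]{barrenechea2016} and \cite[Ineq.\ (4.90)]{lohmann2019}: both rest on exactly your computation, namely that $\nabla v_h$ is constant on $K$, so $|v_i-v_j|\le h_K|\nabla v_h|=h_K|K|^{-1/2}|v_h|_{H^1(K)}$, combined with the shape-regularity bound $|K|\ge c\,h_K^d$. Your closing caveat is also well taken and, if anything, more careful than the statement itself: for $d=3$ the exponent $1-d/2$ is negative, so passing from $h_K$ to the global $h$ genuinely requires quasi-uniformity (a hat function on a single element with $h_K\ll h$ violates the global form), although the paper's sole use of the lemma, in \cref{LEM:dm-errors}, needs only the local version, since there $(v_i-v_j)^2\le C h_K^{2-d}|v_h|^2_{H^1(K)}$ is paired with $d_{ij}\le C h_K^{d-1}$ and $m_{ij}\le C h_K^{d}$ before any comparison with the global mesh size is made.
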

\begin{proof}
See \cite[Pf. of Lem. 7.3]{barrenechea2016} or \cite[Ineq. (4.90)]{lohmann2019}.
\end{proof}
\begin{lemma}[{\cite[Lem. 7.3]{barrenechea2016}}]\label{LEM:dm-errors}
Let $(\mesh)_{h>0}$ be a shape- and contact-regular family of triangulations. Then there exist constants $C_{1,2}>0$, independent of $h$, such that the estimates
\begin{align*}
d_h(v_h,\interp w;\interp w) \le C_1 h \|w\|_{H^2(\Omega)}^2, \quad m_h(v_h,\interp w;\interp w) \le C_2 h^2 \|w\|_{H^2(\Omega)}^2
\end{align*}
hold for all $v_h \in V_h$, $w\in C(\overline\Omega)$.
\end{lemma}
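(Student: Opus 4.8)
The plan is to reduce both nonlinear forms, evaluated diagonally on the interpolant, to nonnegative sums of edge contributions, and then bound each contribution by combining the $h$-power estimates of \cref{LEM:dm-h-powers} with the nodal-difference estimate of \cref{LEM:diffNodes}. Write $W_h = \interp w \in V_h$ with nodal values $W_i = w(\vec x_i)$. Since $\dij$, $\mij$ and $\lij$ are all symmetric in the indices $i,j$ (the first two by construction, the last by the symmetry condition $\lij = \alpha_{ji}$), the same symmetrization that underlies \cref{LEM:dm-positive} yields
\begin{align*}
d_h(v_h; W_h, W_h) &= \tfrac12 \sum_{i=1}^N \afcsum (1-\lij(v_h))\,\dij\,(W_i - W_j)^2, \\
m_h(v_h; W_h, W_h) &= \tfrac12 \sum_{i=1}^N \afcsum \lij(v_h)\,\mij\,(W_i - W_j)^2.
\end{align*}
Because $\lij(v_h) \in [0,1]$, both $1-\lij$ and $\lij$ lie in $[0,1]$, so each right-hand side is bounded above by the corresponding sum with the correction factor replaced by $1$.

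Next I would estimate a single edge term. For nodes $\vec x_i, \vec x_j$ sharing an element $\elem$, \cref{LEM:diffNodes} gives $(W_i - W_j)^2 \le C h^{2-d}\, |W_h|^2_{H^1(\elem)}$, while \cref{LEM:dm-h-powers} gives $\dij \le C h^{d-1}$ and $\mij \le C h^d$. Hence a single edge contributes at most $C h\,|W_h|^2_{H^1(\elem)}$ to the $d_h$-sum and at most $C h^2\,|W_h|^2_{H^1(\elem)}$ to the $m_h$-sum. The crucial cancellation of the dimension-dependent exponents, namely $h^{d-1}h^{2-d}=h$ and $h^{d}h^{2-d}=h^2$, is exactly what produces the two different rates claimed in the statement.

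The remaining step is the geometric bookkeeping. I would reorganize the double sum over index pairs $(i,j)$ into a sum over elements $\elem \in \mesh$: by shape- and contact-regularity the number of vertices per simplex is fixed and the number of elements meeting at a given vertex is uniformly bounded, so each element is counted at most a bounded, $h$-independent number of times and this multiplicity is absorbed into the constant. This yields
\begin{align*}
d_h(v_h; W_h, W_h) \le C h \sum_{\elem \in \mesh} |W_h|^2_{H^1(\elem)} = C h\, |W_h|^2_{H^1(\Omega)},
\end{align*}
and analogously $m_h(v_h; W_h, W_h) \le C h^2\, |W_h|^2_{H^1(\Omega)}$. Finally, I would convert the seminorm of the interpolant into the $H^2$-norm of $w$: by the triangle inequality together with \cref{LEM:interpol}, $|\interp w|_{H^1(\Omega)} \le |w|_{H^1(\Omega)} + C h |w|_{H^2(\Omega)} \le C\|w\|_{H^2(\Omega)}$ for $h$ bounded, which closes both estimates.

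I expect the main obstacle to be precisely this last combinatorial passage from an edge-indexed sum to an element-indexed sum, including the local-to-global assembly of the $H^1(\elem)$ seminorms: one must verify that the reindexing and the overlap of element supports incur only an $h$-independent factor. This is where shape- and contact-regularity of the family $(\mesh)_{h>0}$ is genuinely used; the rest is a direct combination of \cref{LEM:diffNodes,LEM:dm-h-powers,LEM:interpol} once the sum-of-squares representation is in hand.
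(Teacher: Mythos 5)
Your proposal is correct and follows essentially the same route as the paper: the paper's proof likewise combines \cref{LEM:diffNodes} and \cref{LEM:dm-h-powers} to obtain $d_h(v_h;\interp w,\interp w) \le C h |\interp w|_{H^1(\Omega)}^2$ and $m_h(v_h;\interp w,\interp w) \le C h^2 |\interp w|_{H^1(\Omega)}^2$, and then passes to $\|w\|_{H^2(\Omega)}$ via the triangle inequality and \cref{LEM:interpol}. The only difference is that you spell out the steps the paper delegates to the cited references (the symmetrization into a sum of squares, bounding the correction factors by one, and the edge-to-element bookkeeping under shape- and contact-regularity), all of which you handle correctly.
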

\begin{proof}
Combining \cref{LEM:diffNodes,LEM:dm-h-powers}, we obtain
\begin{align*}
d_h(v_h,\interp w;\interp w) \le C_1 h |\interp w|_{H^1(\Omega)}^2, \quad m_h(v_h,\interp w;\interp w) \le C_2 h^2  |\interp w|_{H^1(\Omega)}^2.
\end{align*}
By \cref{LEM:interpol}, the semi-norm $|\interp w|_{H^1(\Omega)}$ satisfies
\begin{align}\label{EQ:H1semiToH2}
 |\interp w|_{H^1(\Omega)} \le\;& |\interp w - w|_{H^1(\Omega)} + |w|_{H^1(\Omega)} \notag\\ \le\;& C h |w|_{H^2(\Omega)} + |w|_{H^1(\Omega)} \le C \|w\|_{H^2(\Omega)}.
\end{align}
The inequalities to be verified follow from these estimates.
\end{proof}

\subsection{Proofs of the main theorems}\label{sec:proofs}

We are now in a position to prove the main theoretical results stated in \cref{SEC:main}.
\begin{proof}[Proof of \cref{TH:stab}]
We test \eqref{EQ:MCL-weak} with $w_h = u_h$, which produces
\begin{align*}
\sum_{i=1}^N u_i m_i \frac{\d u_i}{\d t}
+ a(u_h,u_h) + d_h(u_h;u_h,u_h) - m(u_h; \dot u_h,u_h)
= b(u_h).
\end{align*}
Employing \eqref{EQ:goal} on the left and Young's inequality on the right, we find that
\begin{align*}
\sum_{i=1}^N& \frac{m_i}{2} \frac{\d (u_i)^2}{\d t}
+ a(u_h,u_h) + \gamma d_h(u_h;u_h,u_h)
+ \frac{\gamma h}{\lambda} m_h(u_h; \dot u_h,\dot u_h) 
\\ &\le  
\int_{\Gamma_-} u_h \uin |\vel \cdot \nor|\ds \le 
\frac 1 4\int_{\Gamma_-} u_h^2|\vel \cdot \nor|\ds + 
\int_{\Gamma_-} (\uin)^2|\vel \cdot \nor|\ds.
\end{align*}

By \cref{LEM:a-positive}, the term depending on $u_h^2$ can be compensated
by \smash{$\frac 1 2 a(u_h,u_h)$} on the left hand side. Integration in time yields
\begin{align*}
\sum_{i=1}^N m_i u_i^2(T)&+ \int_0^T\mbox{$\Big[ a(u_h,u_h) + 2\gamma d_h(u_h;u_h,u_h)
+ \frac{2\gamma h}{\lambda} m_h(u_h;\dot u_h, \dot u_h)\Big] \dt$} \\ & \le
\sum_{i=1}^N m_i (u_0(\vec x_i))^2
+ 2 \int_0^T \int_{\Gamma_-} (\uin)^2 |\vel\cdot\nor|\ds\dt.
\end{align*}
The integral on the left is nonnegative by \cref{LEM:a-positive,LEM:dm-positive}. Thus we have shown stability \wrt the lumped $L^2(\Omega)$-norm
\begin{align*}
\| v_h \|_{h} = \bigg(\sum_{i=1}^N m_i v_i^2\bigg)^{\frac 1 2}.
\end{align*}
Stability \wrt $\|\cdot \|_{L^2(\Omega)}$ follows from the fact that these norms are equivalent with constants independent of $h$ \cite[Rem. 6.16]{knabner2003}.
\end{proof}

\begin{proof}[Proof of \cref{TH:conv_afc}]
This proof borrows ideas from \cite{RuppHA2020}, where a new compact proof of a standard error estimate  is presented for the advection equation discretized with discontinuous finite elements.

Since $V_h$ is a subspace of $V$, we can subtract \eqref{EQ:MCL-weak} from \eqref{EQ:cont-weak} to show that the error $u - u_h$ satisfies
\begin{align}\notag
\int_\Omega w_h \frac{\partial u}{\partial t}\dx &{}- \sum_{i=1}^N w_i m_i \frac{\d u_i}{\d t} + a(u - u_h,w_h) \\&= d_h(u_h;u_h,w_h) - m_h(u_h;\dot u_h,w_h)
\qquad \forall w_h \in V_h.\label{EQ:errorOrthog}
\end{align}

Splitting $u-u_h$ into the interpolation error $\theta \coloneqq u - \interp u$
and the remainder $\rho_h \coloneqq \interp u - u_h$, we test \eqref{EQ:errorOrthog} with $w_h = \rho_h$. This yields
\begin{align}\notag
\int_\Omega \rho_h \frac{\partial u}{\partial t}\dx & {}- \sum_{i=1}^N \rho_i m_i \frac{\d u_i}{\d t} + a(\rho_h,\rho_h)
\\&= - a(\theta,\rho_h) + d_h(u_h;u_h,\rho_h)
- m_h(u_h; \dot u_h,\rho_h).\label{EQ:identity1}
\end{align}
Using the decompositions $u = \theta + \rho_h + \interp u - \rho_h$ and
$u_h = \interp u - \rho_h$, we reorganize the time derivative terms as follows:
\begin{align*}
\int_{\Omega} \rho_h \frac{\partial u}{\partial t} \dx &- \sum_{i=1}^N \rho_i m_i \frac{\d u_i}{\d t} = 
\int_{\Omega} \rho_h \frac{\partial \theta}{\partial t} \dx + \int_{\Omega} \rho_h \frac{\partial \rho_h}{\partial t}\dx \\
&{}+\sum_{i,j=1}^N \rho_i \mij \left[\frac{\partial (\interp u)_j}{\partial t}
-  \frac{\partial \rho_j}{\partial t}\right]
- \sum_{i=1}^N \rho_i m_i \frac{\d}{\d t}\left[(\interp u)_i - \rho_i\right] \\
=& \int_{\Omega} \rho_h \frac{\partial \theta}{\partial t}\dx + \frac 1 2 \frac{\partial}{\partial t} \|\rho_h\|^2_{L^2(\Omega)} \\&{}+
\sum_{i=1}^N\sum_{j\in\mathcal N_i} \rho_i m_{ij} \frac{\partial}{\partial t}\left[ (\interp u)_j - (\interp u)_i - (\rho_j - \rho_i) \right] \\
=& \int_{\Omega} \rho_h \frac{\partial \theta}{\partial t}\dx + \frac 1 2 \frac{\partial}{\partial t} \|\rho_h\|^2_{L^2(\Omega)} \\&{}+
\sum_{i=1}^N \sum_{j = 1}^{i-1} (\rho_i - \rho_j) m_{ij} \frac{\partial}{\partial t}\left[ (\interp u)_j - (\interp u)_i - (\rho_j - \rho_i) \right].
\end{align*}
We use this identity in \eqref{EQ:identity1}. Invoking the coercivity
condition \eqref{EQ:goal}, the interpolation error estimate \eqref{EQ:H1semiToH2}, Young's
inequality, as well as \cref{LEM:InvIneq,LEM:dm-positive,LEM:error-a,LEM:interpol,LEM:diffNodes,LEM:dm-h-powers,LEM:dm-errors}, we obtain the estimates
\begin{align*}
\frac 1 2 \frac{\partial}{\partial t}& \|\rho_h\|^2_{L^2(\Omega)} 
+ \sum_{i=1}^N \sum_{j = 1}^{i-1} \frac{\mij}{2} \frac{\partial}{\partial t}(\rho_i - \rho_j)^2 + a(\rho_h,\rho_h)
\\ =\;&
- \int_\Omega \rho_h \frac{\partial \theta}{\partial t}\dx
+ \sum_{i=1}^N \sum_{j=1}^{i-1}
(\rho_i - \rho_j) \mij \frac{\partial}{\partial t}\left[(\interp u)_i - (\interp u)_j\right] \\&
{}- a(\theta,\rho_h) + d_h(u_h;u_h,\interp u)
- \gamma d_h(u_h;u_h,u_h) \\&{}
- [(1 - \gamma) d_h(u_h;u_h,u_h) - m_h(u_h; \dot u_h,u_h)] - m_h(u_h; \dot u_h,\interp u)
\\ 
\le\;&
\frac{1}{2} \|\rho_h\|^2_{L^2(\Omega)} + 
\frac{1}{2} \|\partial_t \theta\|^2_{L^2(\Omega)} + 
C h^{2} \sum_{e=1}^E |\rho_h|_{H^1(K^e)} |\partial_t \interp u|_{H^1(K^e)} \\&
+ Ch^2|u|_{H^2(\Omega)}^2 + C \|\rho_h\|_{L^2(\Omega)}^2 + \frac{\gamma}{2} d_h(u_h;u_h,u_h)
\\&
+ \frac{1}{2\gamma} d_h(u_h; \interp u,\interp u) 
- \gamma d_h(u_h;u_h,u_h)
- \frac{\gamma h}{\lambda} m_h(u_h,\dot u_h, \dot u_h)
\\&{}
+ \frac{\gamma h}{2 \lambda} m_h(u_h; \dot u_h, \dot u_h)
+ \frac{\lambda}{2\gamma h} m_h(u_h; \interp u, \interp u) \\
\le\;&
C \|\rho_h\|^2_{L^2(\Omega)} + C h^4 |\partial_t u|^2_{H^2(\Omega)}
+ Ch^2\|\partial_t u\|_{H^2(\Omega)}^2
\\&
{} + Ch^2 |u|^2_{H^2(\Omega)} - \frac{\gamma}{2} d_h(u_h;u_h,u_h) + \frac{C h}{2\gamma} \|u\|_{H^2(\Omega)}^2
\\&{}
- \frac{\gamma h}{2\lambda} m_h(u_h; \dot u_h, \dot u_h) + \frac{C\lambda}{2 \gamma h} h^2 \|u\|_{H^2(\Omega)}^2.
\end{align*}
Let us now move all terms with the negative sign to the left hand side of the inequality, integrate in time, and exploit the fact that $\rho_h(0)\equiv 0$ for $u_h(0,\cdot) = \interp u_0$. The result of these manipulations is given by
\begin{align*}
\|\rho_h&(T)\|_{L^2(\Omega)}^2
+ \sum_{i=1}^N \sum_{j=1}^{i-1} \mij (\rho_i - \rho_j)^2(T) \\ &{}
+ \int_0^T 2 a(\rho_h,\rho_h) + \gamma d_h(u_h;u_h,u_h) + \frac{\gamma h}{\lambda} m_h(u_h;  \dot u_h, \dot u_h) \dt \\
&\le
C \int_0^T \|\rho_h\|_{L^2(\Omega)}^2 \dt + Ch \int_0^T \left( \|u\|_{H^2(\Omega)}^2 + \|\partial_t u\|_{H^2(\Omega)}^2 \right)\dt.
\end{align*}
Applying Grönwall's inequality, we find that
\smash{$\|\rho_h(T)\|_{L^2(\Omega)} \le Ch^{1/2}$}, owing to \cref{LEM:a-positive,LEM:dm-positive}.
The statement of \cref{TH:conv_afc} follows from \cref{LEM:interpol} and the triangle inequality
\begin{align*}
\|u(T) - u_h(T)\|_{L^2(\Omega)} \le \|\theta(T) \|_{L^2(\Omega)} + \|\rho_h(T)\|_{L^2(\Omega)} \le Ch^2 + Ch^{1/2}.
\end{align*}
\end{proof}

\begin{remark}
Ku\v{c}era and Shu \cite{kucera2018} show that the use of Grönwall's inequality can be avoided in a convergence proof for the advection equation discretized with discontinuous Galerkin methods. This may also be the case for our proof of \cref{TH:conv_afc}. However, adapting the ideas developed in \cite{kucera2018}
to our problem is beyond the scope of this effort.
\end{remark}

\section{Numerical examples}

Having discussed all theoretical aspects of the proposed methods, we now assess their performance for simple one-dimensional test problems.
In all cases, the velocity is given by $a=1$. Thus the analytical solution to \eqref{EQ:analytic} is $u(x,t) = u_0(x-t)$.
To distinguish between
different spatial discretizations, we use the following acronyms:
\begin{itemize}
\item
GC: Galerkin method \eqref{EQ:Galerkin} with the consistent mass matrix,
\item
GS: Galerkin method stabilized by low order time derivatives,
i.e., unlimited target scheme \eqref{EQ:target} using the flux potentials \eqref{EQ:dudt},\eqref{EQ:udot_D} with $\omega=1$ in the fluxes \eqref{EQ:fij_fix}.
\item
LF: algebraic Lax--Friedrichs method \eqref{EQ:low},
\item
MC-L: monolithic convex limiting \eqref{EQ:MCL} using low order time derivatives \eqref{EQ:dudt},\eqref{EQ:udot_D} with $\omega=1$ in the fluxes \eqref{EQ:fij_fix},
\item
MC-0: monolithic convex limiting \eqref{EQ:MCL} using flux potentials $\dot u_h \equiv 0$, i.e., full mass lumping, in the fluxes \eqref{EQ:fij_fix},
\item
CE: coercivity-enforcing version of MC-L, as proposed in \cref{sec:coercivity}, with coercivity constant $\gamma = 0.4$.
\end{itemize}
In all examples, explicit SSP Runge--Kutta methods are used for temporal discretization.

Snapshots of numerical solutions illustrating the qualitative behavior of
different methods are presented in \cref{sec:solutions}. The ability
of these methods to deliver the theoretically proven convergence
rates is verified in the numerical experiments of \cref{sec:rates}.

\subsection{Advection of discontinuous and smooth profiles}
\label{sec:solutions}

We consider $\Omega = (0,1)$ with periodic boundaries. The analytical solution at any time $t \in \mathbb N$ coincides with the initial condition \cite{hajduk-preprint}
\begin{align}\label{EQ:advection}
u_0(x) = \begin{cases}
1 & \mbox{if } 0.2 \leq x \leq 0.4, \\
\exp(10)\exp(\frac{1}{0.5-x})\exp( \frac{1}{x-0.9}) & \mbox{if } 0.5 < x < 0.9, \\
0 & \mbox{otherwise},
\end{cases}
\end{align}
which features two discontinuities as well as a $C^{\infty}$ region. This initial
profile is advected up to the final time $T=1$.

Given a uniform mesh $\mesh$ with vertices $x_i=(i-1)h$,
$i \in \{1,\dots,N\}$ and constant spacing \smash{$h=\frac{1}{N-1}$},
we construct perturbed meshes in the following way.
Let $\xi_i \in [-0.5,0.5]$, $i \in \{2,\dots,N-1\}$ be uniformly distributed random numbers. The perturbed mesh vertices are given by $x_i + \xi_i \zeta h$, 
where $\zeta \in [0, 1)$ is the maximum relative perturbation. The
value $\zeta=0$ corresponds to the unperturbed uniform mesh.

\begin{figure}[ht!]
\includegraphics[scale=0.25,draft=false]{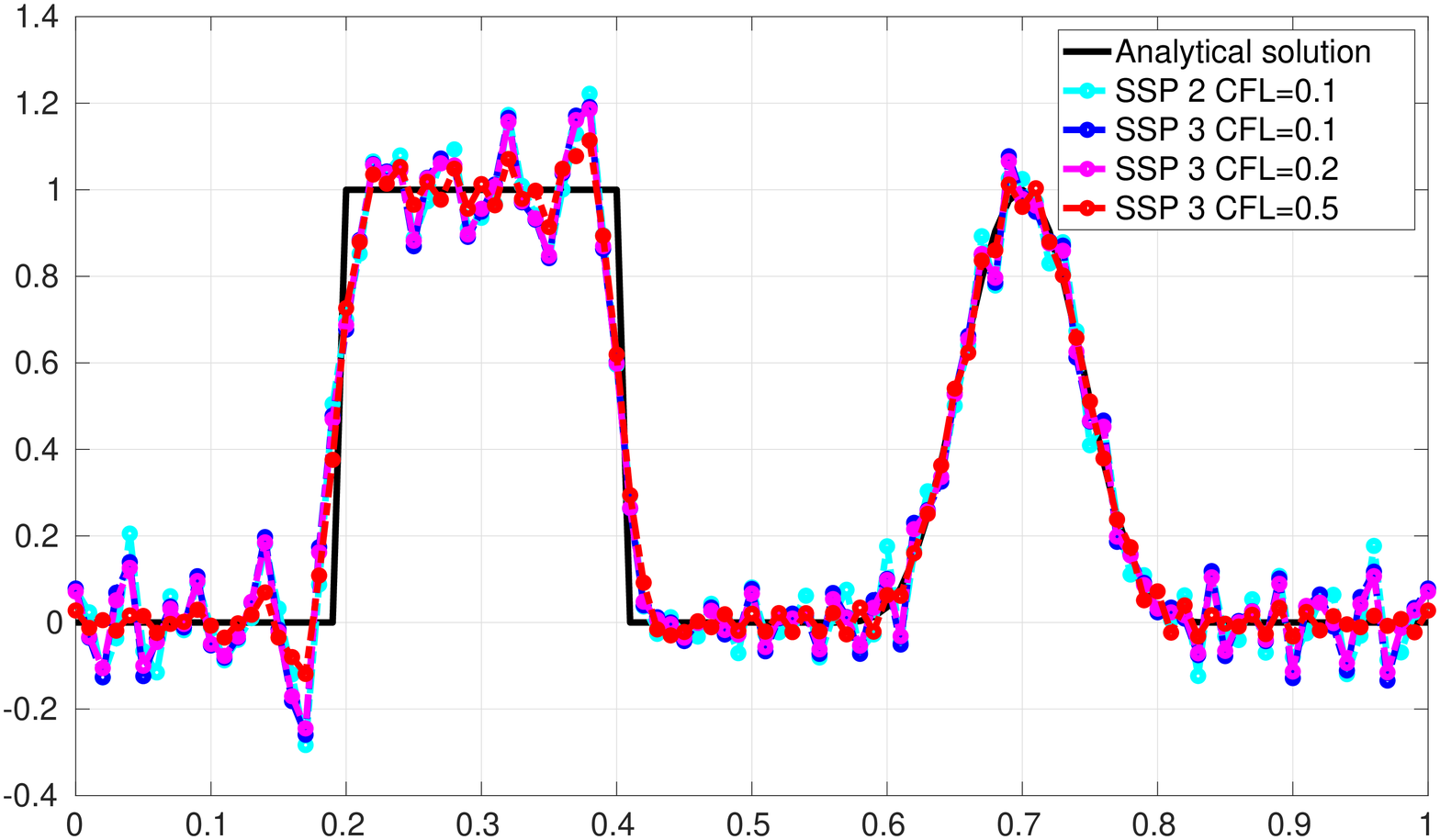}
\includegraphics[scale=0.25,draft=false]{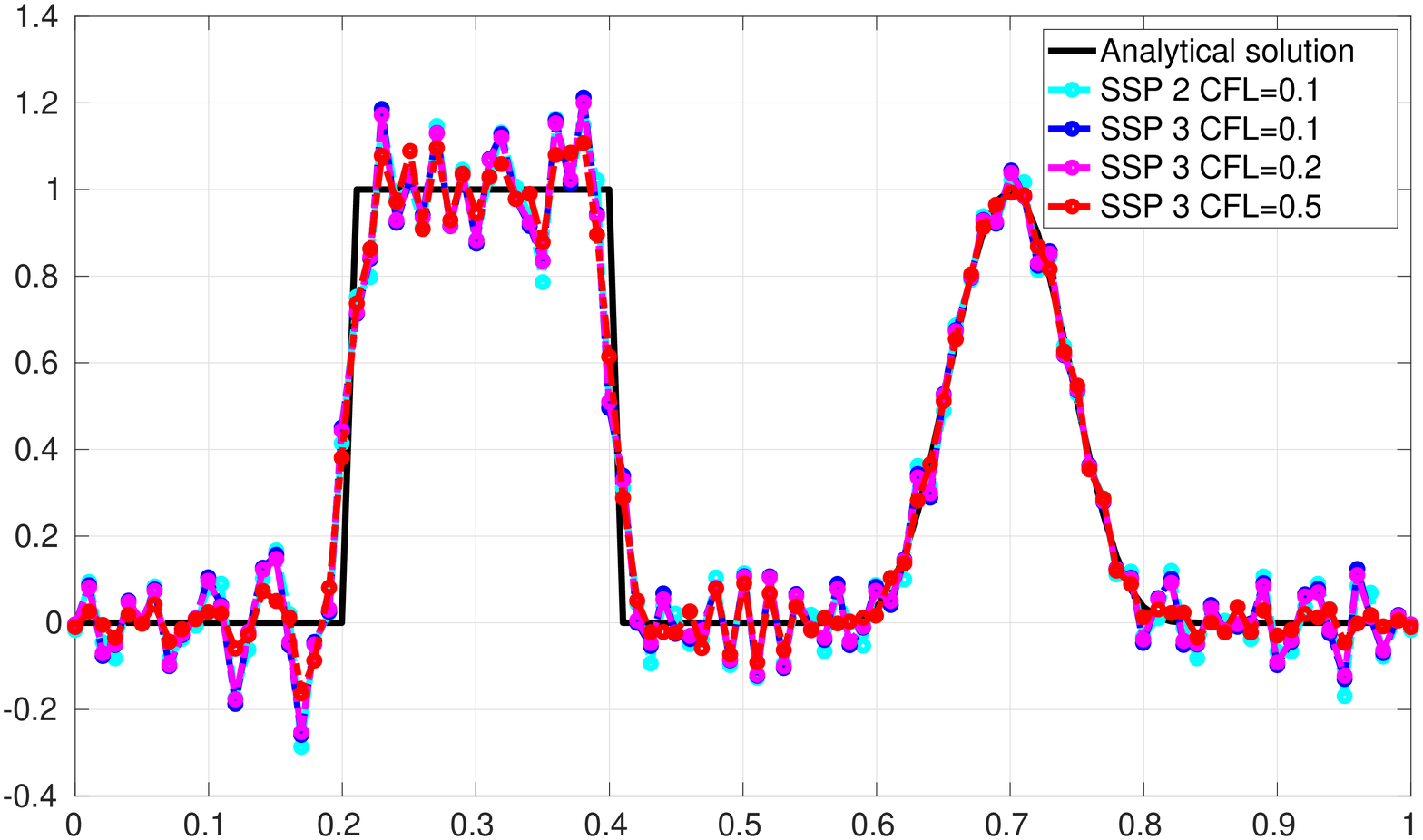}
\includegraphics[scale=0.25,draft=false]{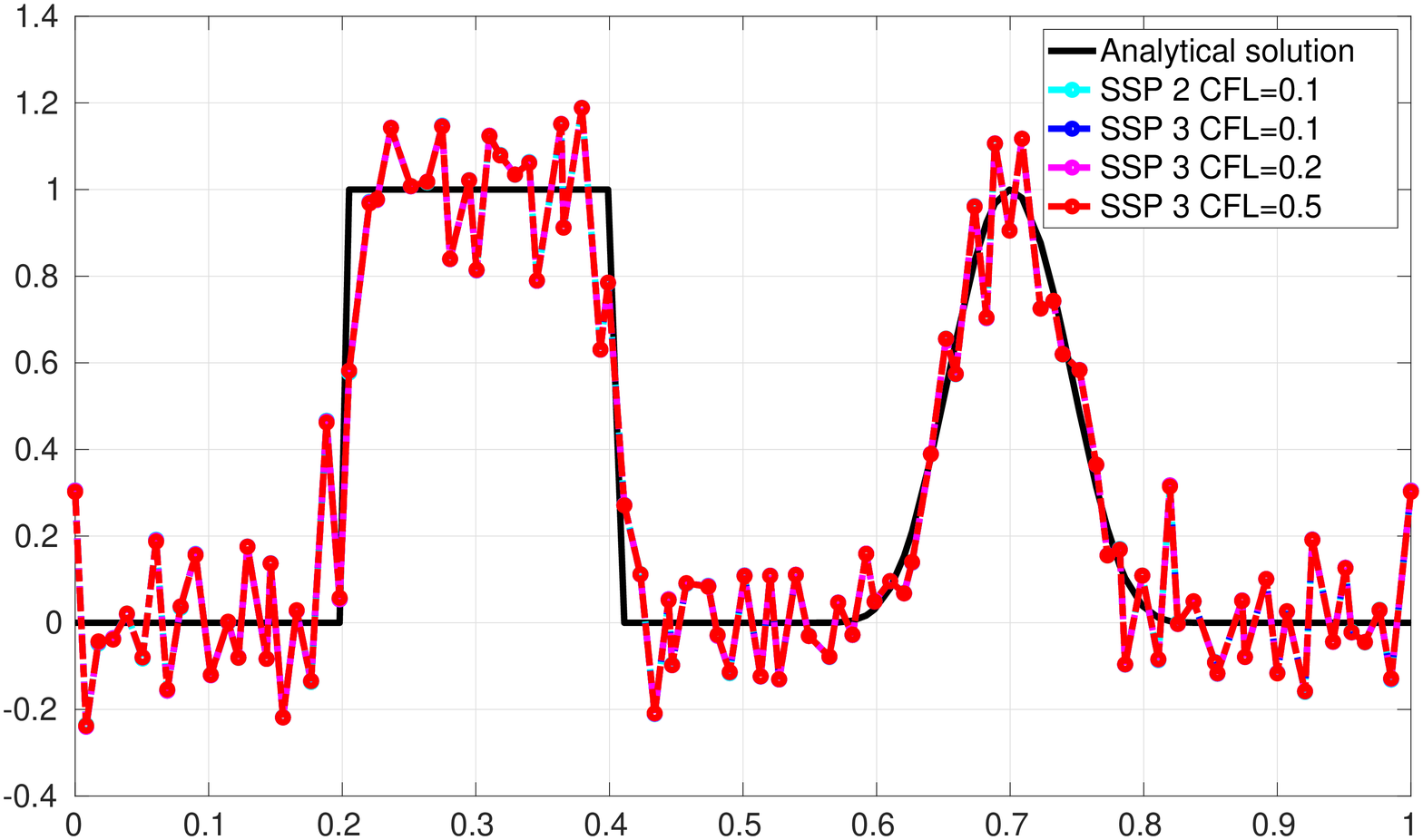}
\caption{Advection of the initial profile
\eqref{EQ:advection}. Numerical approximations to
 $u(x,1)=u_0(x)$ on meshes with $N=101$
vertices (top: $\zeta = 0$, center: $\zeta = 0.1$, bottom:
$\zeta = 0.5$). Space discretization: standard Galerkin.
Time stepping and CFL number: see legend.}
\label{FIG:GC-disc}
\end{figure}

First, we illustrate the performance of the standard Galerkin method for various  time stepping schemes and Courant--Friedrichs--Levi (CFL) numbers $\nu = \Delta t / \min_{i=2,\dots,N} (x_i - x_{i-1})$. The results obtained on meshes with perturbation levels $\zeta \in \{0,0.1,0.5\}$ are displayed in \cref{FIG:GC-disc}. As expected, the presence of steep fronts gives rise to spurious oscillations which can be seen in all curves. The SSP3 time stepping with $\nu=0.5$ seems to produce the least oscillatory results (shown in red) for $\zeta=0$ and $\zeta=0.1$. On the mesh corresponding to the perturbation level $\zeta=0.5$, all Galerkin approximations oscillate strongly even inside the smooth portion of the advected profile. This shows the need for high order stabilization that would prevent global spreading of numerical errors and reduce the amplitude of undershoots/overshoots.

\begin{figure}[ht!]
\includegraphics[scale=0.25,draft=false]{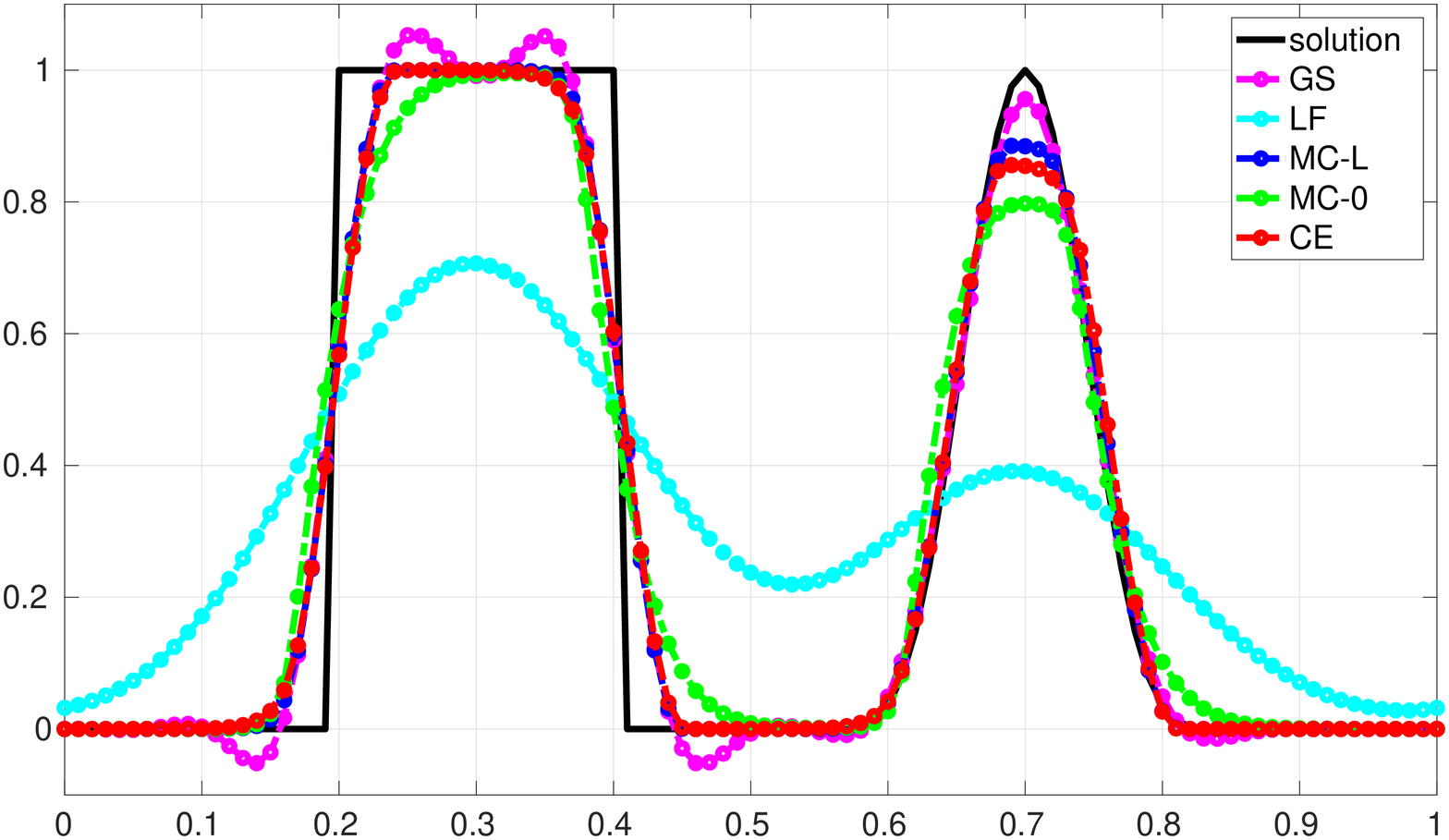}
\includegraphics[scale=0.25,draft=false]{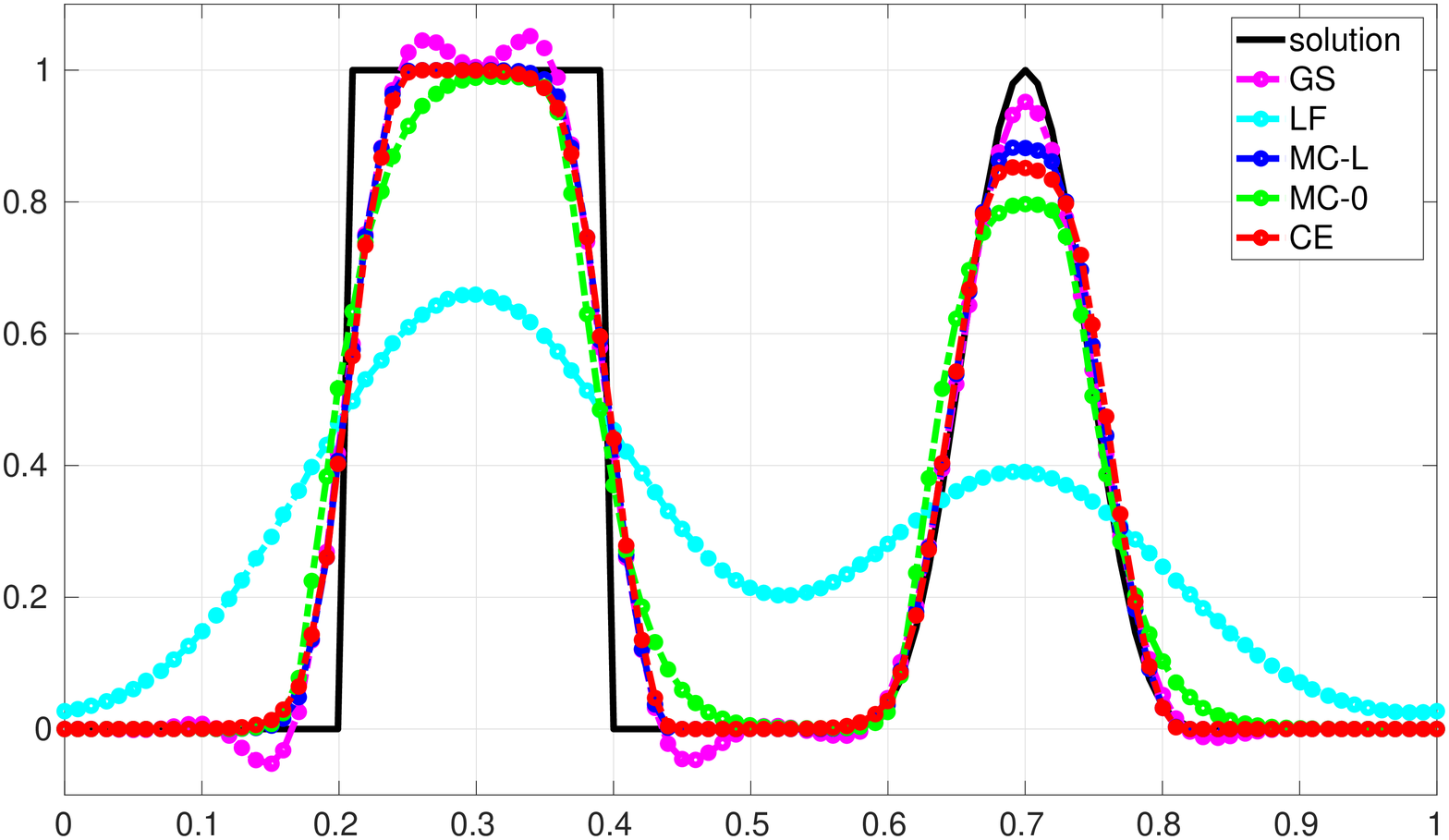}
\includegraphics[scale=0.25,draft=false]{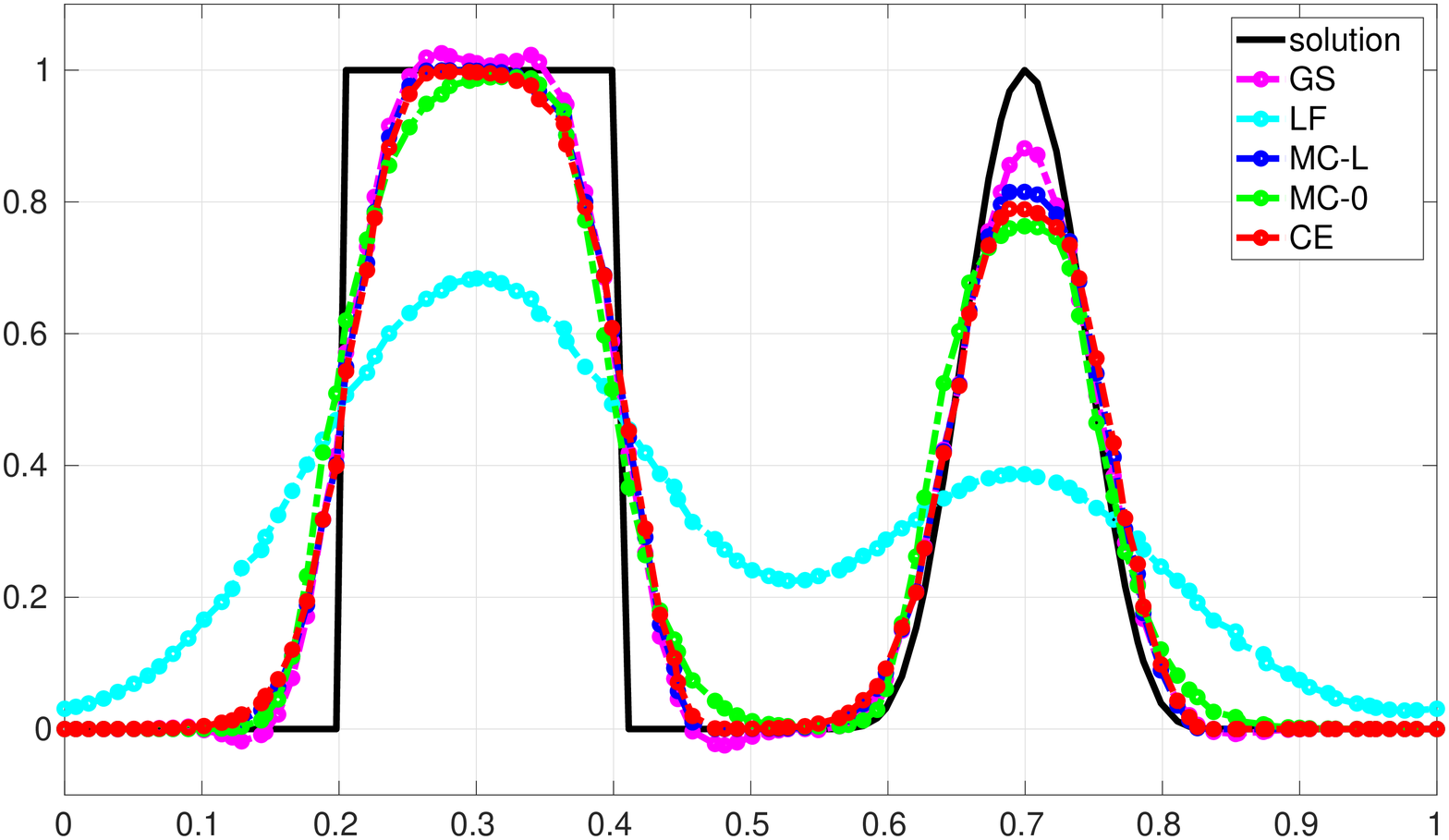}
\caption{Advection of the initial profile
\eqref{EQ:advection}. Numerical approximations to $u(x,1)=u_0(x)$
produced by GS, LF, MC-L, MC-0, and CE on meshes with $N=101$
vertices (top: $\zeta = 0$, center: $\zeta = 0.1$, bottom: $\zeta = 0.5$).}
\label{FIG:disc-stab}
\end{figure}

Let us now advect the initial data \eqref{EQ:advection}
using GS, LF, MC-L, MC-0, and CE on the same meshes. For discretization in time, we use the second order SSP Runge--Kutta method with $\nu = 0.25$. The results presented in \cref{FIG:disc-stab} illustrate the positive effect of using high order stabilization and flux limiting. 

All numerical solutions are free of global oscillations and only
the unlimited GS approximation violates maximum principles around 
discontinuities. Note that this scheme differs from the consistent
Galerkin method only in the definition of the time derivatives for
the antidiffusive fluxes \eqref{EQ:fij_fix}. A direct comparison
of the two methods (purple curves in \cref{FIG:disc-stab} vs.
curves in \cref{FIG:GC-disc}) confirms that the use of
low order time derivatives leads to a better target scheme
for algebraic flux correction. The low order method introduces
enormous amounts of numerical dissipation, while the MC-0 
solution profiles are non-symmetric and/or quite diffusive
compared to MC-L and CE. The disappointing performance of MC-0
is caused by the failure to compensate the mass lumping error.
The best result was delivered by the MC-L scheme, which we
found to satisfy the coercivity condition \eqref{EQ:goal} on all
meshes in this study. The new CE limiting procedure produced
 $\dot \alpha^\pm=1$ in all simulation runs,
i.e., no coercivity corrections were performed. The resulting
approximation is almost indistinguishable from the MC-L solution
at discontinuities and only slightly more diffusive in the smooth
region. The marginally stronger peak clipping effects are caused by
minmod prelimiting for the fluxes $\dot\fij$.
Without this prelimiting, CE is equivalent to MC-L
in the case $\dot \alpha^\pm=1$.

\subsection{Convergence rates for smooth solutions}
\label{sec:rates}

In the next example, the domain $\Omega=(0,1)$ has an inflow
at $x=0$ and an outflow at $x=1$.
We use $\uin=0$ and
the $C^1(\Omega)$ initial data
\begin{align}\label{EQ:advection-smooth}
u_0(x) = \begin{cases}
\frac 1 2 \left( 1 + \cos\left( \frac{\pi}{0.15}(x-0.25) \right)\right) &  \mbox{if } |x-0.25| \le 0.15, \\
0 & \mbox{otherwise}.
\end{cases}
\end{align}
Note that the analytical solution to this problem has the $H^2(\Omega)$ regularity, precisely as required in \cref{TH:conv_afc}.

We solve this problem on a hierarchy of meshes generated via uniform subdivision of (possibly perturbed) coarse ones.
The second order SSP Runge--Kutta method and the CFL number $\nu = 0.25$
are employed in each simulation.
At the final time $T=0.5$, we compute the $L^2(\Omega)$ error, as well as the experimental order of convergence (EOC).
The results of grid convergence studies are reported in \cref{TAB:0,TAB:1,TAB:5}.

\begin{landscape}
\begin{table}[ht!]
\scriptsize
\centering
\begin{tabular}{c|cc|cc|cc|cc|cc|cc}
$N$ & GC & EOC & GS & EOC & LF & EOC & MC-L & EOC & MC-0 & EOC & CE & EOC \\
\hline
33  & 9.87E-03 &      & 4.62E-02 &      & 1.93E-01 &      & 6.32E-02 &      & 8.77E-02 &      & 7.82E-02 \\
65  & 3.12E-03 & 1.66 & 1.03E-02 & 2.16 & 1.46E-01 & 0.40 & 1.42E-02 & 2.15 & 3.08E-02 & 1.51 & 2.02E-02 & 1.95 \\
129 & 9.08E-04 & 1.78 & 2.25E-03 & 2.19 & 9.94E-02 & 0.56 & 3.47E-03 & 2.04 & 1.27E-02 & 1.27 & 5.33E-03 & 1.93 \\
257 & 2.87E-04 & 1.66 & 5.44E-04 & 2.05 & 6.09E-02 & 0.71 & 8.81E-04 & 1.98 & 4.17E-03 & 1.61 & 1.37E-03 & 1.95 \\
513 & 1.60E-04 & 0.84 & 1.41E-04 & 1.94 & 3.45E-02 & 0.82 & 2.24E-04 & 1.98 & 1.30E-03 & 1.68 & 3.48E-04 & 1.98 \\
\end{tabular}
\vspace{3mm}
\caption{Advection of the initial profile \eqref{EQ:advection-smooth}. The $\|\cdot\|_{L^2(\Omega)}$~errors at $T=0.5$ and corresponding EOCs on successively refined uniform meshes ($\zeta=0$).}\label{TAB:0}

\begin{tabular}{c|cc|cc|cc|cc|cc|cc}
$N$ & GC & EOC & GS & EOC & LF & EOC & MC-L & EOC & MC-0 & EOC & CE & EOC \\
\hline
33  & 1.13E-02 &      & 4.75E-02 &      & 1.93E-01 &      & 6.39E-02 &      & 8.82E-02 &      & 7.88E-02 \\
65  & 3.02E-03 & 1.91 & 1.17E-02 & 2.02 & 1.47E-01 & 0.40 & 1.52E-02 & 2.07 & 3.14E-02 & 1.49 & 2.10E-02 & 1.91 \\
129 & 7.66E-04 & 1.98 & 2.55E-03 & 2.20 & 9.99E-02 & 0.55 & 3.61E-03 & 2.07 & 1.32E-02 & 1.25 & 5.54E-03 & 1.92 \\
257 & 2.14E-04 & 1.84 & 6.03E-04 & 2.08 & 6.14E-02 & 0.70 & 9.14E-04 & 1.98 & 4.32E-03 & 1.61 & 1.45E-03 & 1.93 \\
513 & 8.08E-05 & 1.41 & 1.53E-04 & 1.98 & 3.48E-02 & 0.82 & 2.32E-04 & 1.98 & 1.36E-03 & 1.67 & 3.70E-04 & 1.97 \\
\end{tabular}
\vspace{3mm}
\caption{Advection of the initial profile \eqref{EQ:advection-smooth}. The $\|\cdot\|_{L^2(\Omega)}$~errors at $T=0.5$ and corresponding EOCs on successively refined perturbed meshes ($\zeta=0.1$).}
\label{TAB:1}

\begin{tabular}{c|cc|cc|cc|cc|cc|cc}
$N$ & GC & EOC & GS & EOC & LF & EOC & MC-L & EOC & MC-0 & EOC & CE & EOC \\
\hline
33  & 4.28E-02 &      & 7.10E-02 &      & 1.98E-01 &      & 9.30E-02 &      & 1.07E-01 &      & 1.05E-01 \\
65  & 8.31E-03 & 2.37 & 2.99E-02 & 1.25 & 1.55E-01 & 0.35 & 3.77E-02 & 1.30 & 4.53E-02 & 1.23 & 4.30E-02 & 1.29 \\
129 & 1.95E-03 & 2.09 & 7.71E-03 & 1.95 & 1.09E-01 & 0.51 & 9.25E-03 & 2.03 & 1.77E-02 & 1.36 & 1.09E-02 & 1.98 \\
257 & 4.71E-04 & 2.05 & 1.91E-03 & 2.02 & 6.90E-02 & 0.66 & 2.33E-03 & 1.99 & 6.02E-03 & 1.55 & 2.82E-03 & 1.95 \\
513 & 4.99E-04 & -0.08 & 4.82E-04 & 1.98 & 3.99E-02 & 0.79 & 5.79E-04 & 2.01 & 1.90E-03 & 1.66 & 7.05E-04 & 2.00 \\
\end{tabular}
\vspace{3mm}
\caption{Advection of the initial profile \eqref{EQ:advection-smooth}. The $\|\cdot\|_{L^2(\Omega)}$~errors at $T=0.5$ and corresponding EOCs on successively refined perturbed meshes ($\zeta=0.5$).}\label{TAB:5}
\end{table}
\end{landscape}

The failure of GC to converge properly can be blamed on the use of time steps that are too large for this approach. If $\nu$ is set to $0.1$, we observe second order convergence for GC on all meshes employed in this study. All other EOCs are at least as high as the theory predicts. The assumptions of \cref{TH:conv_afc} are always satisfied for the LF method (because all correction factors are set to zero), for the MC-0 scheme (because the generalized coercivity condition holds trivially), and for the CE version which is designed to enforce \eqref{EQ:goal} in the process of flux correction. If condition \eqref{EQ:goal} holds for the GS or MC-L fluxes without additional limiting, \cref{TH:conv_afc} is applicable as well. In simulations on
 perturbed meshes, we observed occasional violations of \eqref{EQ:goal} for MC-L and the use of $\dot\alpha^\pm<1$ in the CE version. On uniform meshes, condition \eqref{EQ:goal} was never violated for MC-L or CE with $\dot\alpha^\pm=1$.

The actual convergence rates of all methods under investigation are, in fact, higher than $h^{1/2}$. The EOC of the LF method gradually approaches 1 as the mesh is refined. This is to be expected because LF is equivalent to the first order upwind method for linear advection on uniform 1D meshes. The MCL-0 convergence rates stagnate around 1.67, while GS, MC-L, and CE exhibit second order convergence already on coarse meshes. Since a locally bound-preserving scheme can be at most second order accurate \cite{zhang2011}, the behavior of MC-L and CE is optimal. The lack of second order superconvergence for MC-0 can again be attributed to numerical dispersion due to mass lumping.

The fact that some EOCs in \cref{TAB:0,TAB:1,TAB:5} are significantly higher than the provable order $\frac 1 2$ is not surprising, since we have not made any assumptions on the choice of correction factors \lij~other than condition \cref{EQ:goal}. In fact, first order convergence could be shown under the assumption that the overall correction factors, defined by
\begin{align}\label{eq:auh}
\alpha(u_h)&=1-\frac{ d_h(u_h;u_h,u_h)}{\sum_{i=1}^N
  \sum_{j=1}^{i-1}d_{ij}(u_i-u_j)^2},\\
 \dot \alpha(u_h)
&=\frac{m_h(u_h;\dot u_h,\dot u_h)}{\sum_{i=1}^N
  \sum_{j=1}^{i-1}m_{ij}(\dot u_i-\dot u_j)^2},\label{eq:dauh}
\end{align}
behave as $O(h)$. If this is the case, we can gain an additional power
of $h$ after applying Young's inequality in the proof of \cref{TH:conv_afc}.
Similarly to the generalized coercivity condition, it is easy to verify
the validity of~\eqref{eq:auh} and \eqref{eq:dauh} \textit{a~posteriori}.
However, it is not so easy to enforce these conditions if they are
violated, because this may require undesirable relaxation of
local bounds. Moreover, the $O(h)$ behavior of $\alpha(u_h)$ and 
$\dot \alpha(u_h)$ cannot be guaranteed for arbitrary
$u_h\in V_h$. Nevertheless, the fact that the EOC of many AFC schemes
can be as high as 2 in practice justifies the quest for new
limiting criteria and procedures that provably guarantee
at least first order convergence for hyperbolic problems.

\section{Conclusions}
The presented research
appears to be the first theoretical investigation of semi-discrete AFC schemes for evolutionary hyperbolic problems. To obtain an a~priori error estimate with convergence rate~$\frac 1 2$, we formulated a generalized coercivity condition and enforced it using a modification of the monolithic convex limiting procedure. As shown numerically, coercivity enforcement has no appreciable negative impact on the accuracy of flux-corrected finite element approximations. Moreover, our numerical examples indicate that the original MCL scheme is essentially coercive if the antidiffusive fluxes are stabilized, e.g., using a low order approximation to the nodal time derivatives.

Of course, the implications of generalized coercivity conditions and performance of limiting procedures based on such conditions require additional numerical studies in the multidimensional case. Furthermore, it is worth investigating if raw antidiffusive fluxes can be stabilized in such a way that coercivity corrections become unnecessary.

It is hoped that the ideas presented in this work can be used for analysis of fully discrete problems and extended to nonlinear conservation laws, hopefully even systems like the Euler equations. Other interesting avenues to explore in future studies include analysis of AFC schemes for other target discretizations, such as discontinuous Galerkin methods and/or higher order finite elements. Moreover, the aspects of inexact numerical integration may need to be taken into account. We invite the interested reader to participate in these research endeavors.

\bibliographystyle{bibstyle-article}
\bibliography{bibliography}

\begin{thebibliography}{10}
\providecommand{\url}[1]{\texttt{#1}}
\providecommand{\urlprefix}{URL }
\expandafter\ifx\csname urlstyle\endcsname\relax
  \providecommand{\doi}[1]{doi:\discretionary{}{}{}#1}\else
  \providecommand{\doi}{doi:\discretionary{}{}{}\begingroup
  \urlstyle{rm}\Url}\fi

\bibitem{anderson2017}
\textsc{R.~Anderson}, \textsc{V.~Dobrev}, \textsc{T.~Kolev},
  \textsc{D.~Kuzmin}, \textsc{M.~Quezada~de Luna}, \textsc{R.~Rieben},
  \textsc{V.~Tomov} (2017) \emph{High-order local maximum principle preserving
  ({MPP}) discontinuous {G}alerkin finite element method for the transport
  equation} J. Comput. Phys. \textbf{334}: 102--124
  \doi{10.1016/j.jcp.2016.12.031}

\bibitem{barrenechea2017a}
\textsc{G.~R. Barrenechea}, \textsc{E.~Burman}, \textsc{F.~Karakatsani} (2017)
  \emph{Edge-based nonlinear diffusion for finite element approximations of
  convection--diffusion equations and its relation to algebraic flux-correction
  schemes} Numer. Math. \textbf{135}: 521--545 \doi{10.1007/s00211-016-0808-z}

\bibitem{barrenechea2016}
\textsc{G.~R. Barrenechea}, \textsc{V.~John}, \textsc{P.~Knobloch} (2016)
  \emph{Analysis of algebraic flux correction schemes} SIAM J. Numer. Anal.
  \textbf{54}: 2427--2451 \doi{10.1137/15M1018216}

\bibitem{barrenechea2018}
\textsc{G.~R. Barrenechea}, \textsc{V.~John}, \textsc{P.~Knobloch},
  \textsc{R.~Rankin} (2018) \emph{A unified analysis of algebraic flux
  correction schemes for convection--diffusion equations} SeMA Journal
  \textbf{75}: 655--685 \doi{10.1007/s40324-018-0160-6}

\bibitem{dipietro2012}
\textsc{D.~A. {Di Pietro}}, \textsc{A.~Ern} (2012) \emph{Mathematical Aspects
  of Discontinuous Galerkin Methods} Springer \doi{10.1007/978-3-642-22980-0}

\bibitem{ern2004}
\textsc{A.~Ern}, \textsc{J.-L. Guermond} (2004) \emph{Theory and Practice of
  Finite Elements} Springer \doi{10.1007/978-1-4757-4355-5}

\bibitem{guermond2016}
\textsc{J.-L. Guermond}, \textsc{B.~Popov} (2016) \emph{Invariant domains and
  first-order continuous finite element approximation for hyperbolic systems}
  SIAM J. Numer. Anal. \textbf{54}: 2466--2489 \doi{10.1137/16M1074291}

\bibitem{hajduk-preprint}
\textsc{H.~Hajduk} (2021) \emph{Monolithic convex limiting in discontinuous
  Galerkin discretizations of hyperbolic conservation laws} Comput. Math. Appl.
  \textbf{87}: 120--138 \doi{10.1016/j.camwa.2021.02.012}

\bibitem{jameson1993}
\textsc{A.~Jameson} (1993) \emph{Computational algorithms for aerodynamic
  analysis and design} Appl. Numer. Math. \textbf{13}: 383--422
  \doi{10.1016/0168-9274(93)90096-A}

\bibitem{jameson1995}
\textsc{A.~Jameson} (1995) \emph{Positive schemes and shock modelling for
  compressible flows} Int. J. Numer. Meth. Fl. \textbf{20}: 743--776
  \doi{10.1002/fld.1650200805}

\bibitem{jst}
\textsc{A.~Jameson}, \textsc{W.~Schmidt}, \textsc{E.~Turkel} (1981)
  \emph{Numerical solution of the {E}uler equations by finite volume methods
  using {R}unge--{K}utta time stepping schemes} AIAA Paper 81-1259
  \doi{10.2514/6.1981-1259}

\bibitem{jha-preprint}
\textsc{A.~Jha}, \textsc{N.~Ahmed} (2021) \emph{Analysis of Flux Corrected
  Transport Schemes for Evolutionary Convection-Diffusion-Reaction Equations}
  preprint: arXiv:2103.04776

\bibitem{jha2019}
\textsc{A.~Jha}, \textsc{V.~John} (2019) \emph{A study of solvers for nonlinear
  {AFC} discretizations of convection--diffusion equations} Comput. Math. Appl.
  \textbf{78}: 3117--3138 \doi{10.1016/j.camwa.2019.04.020}

\bibitem{jiang2018}
\textsc{Y.~Jiang}, \textsc{H.~Liu} (2018) \emph{Invariant-region-preserving DG
  methods for multi-dimensional hyperbolic conservation law systems, with an
  application to compressible Euler equations} J. Comput. Phys. \textbf{373}:
  385--409 \doi{10.1016/j.jcp.2018.03.004}

\bibitem{knabner2003}
\textsc{P.~Knabner}, \textsc{L.~Angermann} (2003) \emph{Numerical Methods for
  Elliptic and Parabolic Partial Differential Equations} Springer
  \doi{10.1007/b97419}

\bibitem{kucera2018}
\textsc{V.~Ku\v{c}era}, \textsc{C.-W. Shu} (2018) \emph{On the time growth of
  the error of the DG method for advective problems} IMA J. Numer. Anal.
  \textbf{39}: 687--712 \doi{10.1093/imanum/dry013}

\bibitem{kuzmin2001}
\textsc{D.~Kuzmin} (2001) \emph{Positive finite element schemes based on the
  flux-corrected transport procedure} in \emph{Computational Fluid and Solid
  Mechanics} 887--888 Citeseer

\bibitem{kuzmin2009}
\textsc{D.~Kuzmin} (2009) \emph{Explicit and implicit FEM-FCT algorithms with
  flux linearization} J. Comput. Phys. \textbf{228}: 2517--2534
  \doi{10.1016/j.jcp.2008.12.011}

\bibitem{kuzmin2012a}
\textsc{D.~Kuzmin} (2012) \emph{Algebraic flux correction {I}. {S}calar
  conservation laws} in \emph{Flux-Corrected Transport} 145--192 Springer
  \doi{10.1007/978-94-007-4038-9_6}

\bibitem{kuzmin2020d}
\textsc{D.~Kuzmin} (2020) \emph{Gradient-based limiting and stabilization of
  continuous Galerkin methods} in \emph{Numerical Methods for Flows} 331--339
  Springer \doi{10.1007/978-3-030-30705-9_29}

\bibitem{kuzmin2020}
\textsc{D.~Kuzmin} (2020) \emph{Monolithic convex limiting for continuous
  finite element discretizations of hyperbolic conservation laws} Comput.
  Method. Appl. M. \textbf{361}: 112804 \doi{10.1016/j.cma.2019.112804}

\bibitem{kuzmin2010a}
\textsc{D.~Kuzmin}, \textsc{M.~Möller}, \textsc{J.~N. Shadid},
  \textsc{M.~Shashkov} (2010) \emph{Failsafe flux limiting and constrained data
  projections for equations of gas dynamics} J. Comput. Phys. \textbf{229}:
  8766--8779 \doi{10.1016/j.jcp.2010.08.009}

\bibitem{kuzmin2020g}
\textsc{D.~Kuzmin}, \textsc{M.~Quezada~de Luna} (2020) \emph{Entropy
  conservation property and entropy stabilization of high-order continuous
  Galerkin approximations to scalar conservation laws} Comput. Fluids
  \textbf{213}: 104742 \doi{10.1016/j.compfluid.2020.104742}

\bibitem{kuzmin2002}
\textsc{D.~Kuzmin}, \textsc{S.~Turek} (2002) \emph{Flux correction tools for
  finite elements} J. Comput. Phys. \textbf{175}: 525--558
  \doi{10.1006/jcph.2001.6955}

\bibitem{lohmann2019}
\textsc{C.~Lohmann} (2019) \emph{Physics-Compatible Finite Element Methods for
  Scalar and Tensorial Advection Problems} Springer Spektrum
  \doi{10.1007/978-3-658-27737-6}

\bibitem{lohmann2017}
\textsc{C.~Lohmann}, \textsc{D.~Kuzmin}, \textsc{J.~N. Shadid},
  \textsc{S.~Mabuza} (2017) \emph{Flux-corrected transport algorithms for
  continuous {G}alerkin methods based on high order {B}ernstein finite
  elements} J. Comput. Phys. \textbf{344}: 151--186
  \doi{10.1016/j.jcp.2017.04.059}

\bibitem{lohner2008}
\textsc{R.~Löhner} (2008) \emph{Applied Computational Fluid Dynamics
  Techniques: An Introduction Based on Finite Element Methods} John Wiley \&
  Sons (2nd ed.) \doi{10.1002/9780470989746}

\bibitem{mer1998}
\textsc{K.~Mer} (1998) \emph{Variational analysis of a mixed element/volume
  scheme with fourth-order viscosity on general triangulations} Comput. Methods
  Appl. Mech. Engrg. \textbf{153}: 45--62 \doi{10.1016/S0045-7825(97)00064-9}

\bibitem{pazner-preprint}
\textsc{W.~Pazner} (2020) \emph{Sparse invariant domain preserving
  discontinuous {G}alerkin methods with subcell convex limiting} preprint:
  arXiv:2004.08503

\bibitem{quarteroni1994}
\textsc{A.~Quarteroni}, \textsc{A.~Valli} (1994) \emph{Numerical Approximation
  of Partial Differential Equations} Springer \doi{10.1007/978-3-540-85268-1}

\bibitem{RuppHA2020}
\textsc{A.~Rupp}, \textsc{M.~Hauck}, \textsc{V.~Aizinger} (2020) \emph{A
  subcell-enriched Galerkin method for advection problems}
  \urlprefix\url{https://arxiv.org/abs/2006.09041}

\bibitem{selmin1993}
\textsc{V.~Selmin} (1993) \emph{The node-centred finite volume approach:
  {B}ridge between finite differences and finite elements} Comput. Methods
  Appl. Mech. Engrg. \textbf{102}: 107--138 \doi{10.1016/0045-7825(93)90143-L}

\bibitem{zhang2011}
\textsc{X.~Zhang}, \textsc{C.-W. Shu} (2011) \emph{Maximum-principle-satisfying
  and positivity-preserving high-order schemes for conservation laws: survey
  and new developments} Proc. R. Soc. A \textbf{467}: 2752--2776
  \doi{10.1098/rspa.2011.0153}

\end{thebibliography}
\end{document}